\newcommand \commentout[1] {}
\newcommand{\R}{\mathbb{R}}
\newtheorem{theorem}{Theorem}
\newtheorem{definition}[theorem]{Definition}
\newtheorem{proposition}[theorem]{Proposition}
\newtheorem{lemma}[theorem]{Lemma}
\theoremstyle{definition}
\newtheorem{remark}{Remark}
\title{Continuum limit of hypergraph $p$-Laplacian equations on point clouds}
\author{
Kehan Shi\thanks{{Department of Mathematics, China Jiliang University, Hangzhou 310018, China.} \texttt{kshi@cjlu.edu.cn}}
}
\date{}
\begin{document}

\maketitle

\begin{abstract}
This paper studies a class of $p$-Laplacian equations on point clouds that arise from hypergraph learning in a semi-supervised setting.
Under the assumption that the point clouds consist of independent random samples drawn from a bounded domain $\Omega\subset\mathbb{R}^d$,
we investigate the asymptotic behavior of the solutions as the number of data points tends to infinity, with the number of labeled points remains fixed.
We show, for any $p>d$ in the viscosity solution framework, that the continuum limit is a weighted $p$-Laplacian equation subject to mixed Dirichlet and Neumann boundary conditions.
The result provides a new discretization of the $p$-Laplacian on point clouds.
Numerical experiments on high-dimensional data interpolation are presented.
\end{abstract}

\medskip

\noindent{\bf Keywords:} Hypergraph learning, $p$-Laplacian equation, continuum limit, semi-supervised learning, point cloud.

\medskip

\noindent{\bf Mathematics Subject Classification:} 35J60, 35D40, 65N12.

\section{Introduction}
Semi-supervised learning is a fundamental problem in data science and has tremendous practical value. In many tasks, the acquisition of labeled data is expensive and time-consuming, whereas the acquisition of unlabeled data is relatively easy.
This requires developing reliable algorithms to assign labels to a large number of unlabeled data points using a small number of given labeled ones \cite{zhu2009introduction}.
It is mathematically formulated as the following interpolation problem.
We are given a set of points $\Omega_n=\{x_i\}_{i=1}^n$ drawn from a bounded domain $\Omega\subset\mathbb{R}^d$ and a function $f$ defined on a subset $\mathcal{O}=\{x_i\}_{i=1}^N$, where $1\leq N<n$ and the sampling rate $\frac{N}{n}$ is usually low.
The goal is to find an estimator $u_n\in\mathbb{R}^n$ satisfying $u_n(x_i)=f(x_i):=y_i$ for $x_i\in\mathcal{O}$.

Additional assumptions are needed to ensure a meaningful solution for the underdetermined inverse problem.
A common approach is to adopt the smoothness assumption in the graph setting.
The data set together with its geometric structure are represented by a graph $G_n=(V_n, W_n)$ with vertex set $V_n=\Omega_n$ and edge weight set $W_n$.
A nonnegative weight $ w_{i,j}\in W_n$ denotes the similarity between points $x_i$ and $x_j$, which is usually a decreasing function of the distance $|x_i-x_j|$.
An edge $e_{i,j}$ connecting $x_i$ and $x_j$ exists if and only if $w_{i,j}>0$.

The smoothness assumption asserts that connected points in $G_n$ share similar values. This can be achieved by regularization on the graph.
Considering the $p$-Laplacian regularization,
 we find a solution $u_n$ by solving the constrained optimization problem
\begin{equation*}
  \min_{u_n\in\mathbb{R}^n}F^{G_n}_p(u_n):=\sum_{i,j=1}^nw_{i,j}|u_n(x_i)-u_n(x_j)|^p, \quad \mbox{ s.t. $u_n=f$ on $\mathcal{O}$},
\end{equation*}
where $1<p<\infty$.
By gradient descent, $u_n$ satisfies the $p$-Laplacian equation
\begin{equation}\label{eq:1.5}
  \begin{cases}
    L^{G_n}_pu_n(x_i):=\sum\limits_{j=1}^{n}w_{i,j}|u_n(x_j)-u_n(x_i)|^{p-2}(u_n(x_j)-u_n(x_i))=0, & \mbox{if } x_i\in\Omega_n\backslash\mathcal{O}, \\
    u_n(x_i)=y_i, & \mbox{if } x_i\in \mathcal{O}.
  \end{cases}
\end{equation}
If $p=2$, $F^{G_n}_p$ is the graph version of the well-known Tikhonov regularization and $L^{G_n}_p$ is the graph Laplacian.
They were first proposed for semi-supervised learning in \cite{zhu2003semi} and subsequently used in a variety of tasks, including image processing and machine learning \cite{von2007tutorial,gilboa2007nonlocal,elmoataz2015p,shi2017weighted}.
Although the graph Laplacian equation is easy to solve (since it corresponds to a linear system), it is often preferred to use the $p$-Laplacian with a large exponent $p>2$ in applications when the sampling rate $\frac{N}{n}$ is extremely low \cite{el2016asymptotic,flores2022analysis}.
This is due to the fact that, for a random geometric graph $G_n$, when $N$ is fixed and $n\rightarrow\infty$, the solution of the graph Laplacian equation  converges to a constant on $\Omega_n\backslash\mathcal{O}$ \cite{calder2020properly}.
Since $u=f$ on $\mathcal{O}$, the spiky phenomenon is observed at $\mathcal{O}$.
See Figure \ref{fig:1-a} for an illustration.
It has been shown that $p>d$ is a necessary condition for avoiding the spiky solution \cite{slepcev2019analysis}. Formally speaking, $F^{G_n}_p(u_n)$ is consistent with an energy of the form $\int_\Omega|\nabla u|^pdx$, which is well-defined in $W^{1,p}(\Omega)$.
If $p>d$, by Sobolev's embedding theorem, $u\in W^{1,p}(\Omega)\hookrightarrow C^{\alpha}(\overline{\Omega})$ is H\"{o}lder continuous and thus the  spiky solution is prevented in the discrete case.
Figure \ref{fig:1-a}--\ref{fig:1-c} show an example of the spiky phenomena, which diminishes as the exponent $p$ increases.

A main drawback of the graph $p$-Laplacian with $p> 2$ is its low numerical efficiency, especially when $p$ is large.
It is straightforward to let $p\rightarrow+\infty$ and
consider the Lipschitz learning \cite{von2004distance,kyng2015algorithms}
\begin{equation*}
  F^{G_n}_{\infty}(u_n)=\max_{x_i,x_j\in\Omega_n}w_{i,j}|u_n(x_i)-u_n(x_j)|.
\end{equation*}
The asymptotic behavior when $N$ is fixed and $n\rightarrow\infty$ has been studied in \cite{roith2023continuum}, where its well-posedness for semi-supervised learning is established.
Note that $F^{G_n}_{\infty}$ only penalizes the largest gradient. The minimizer is not unique in general.
In practice, one looks for a minimizer whose gradient is minimal in the lexicographical order \cite{kyng2015algorithms}.
More precisely, the minimizer satisfies the graph $\infty$-Laplace equation
\begin{equation*}
  \begin{cases}
    L^{G_n}_{\infty}u_n(x_i):=\max\limits_{1\leq j\leq n}w_{i,j}(u_n(x_j)-u_n(x_i))+\min\limits_{1\leq j\leq n}w_{i,j}(u_n(x_j)-u_n(x_i))=0, & \mbox{if } x_i\in\Omega_n\backslash\mathcal{O}, \\
    u_n(x_i)=y_i, & \mbox{if } x_i\in \mathcal{O}.
  \end{cases}
\end{equation*}
It is also well-defined with an arbitrarily low sampling rate \cite{calder2019consistency,bungert2023uniform}.
See Figure \ref{fig:1-c} for a numerical example.
We note that neither the Lipschitz learning nor the graph $\infty$-Laplace equation uses the data distribution of the given data to improve the learning result, which contrasts with the graph $p$-Laplacian with finite $p$ \cite{roith2023continuum,calder2019consistency}.

An alternative is to combine the Laplacian and the $\infty$-Laplacian, leading to the game-theoretic $p$-Laplacian
\begin{equation}\label{eq:1.6}
  \begin{cases}
    \mathcal{L}^{G_n}_pu_n(x_i):=\frac{1}{d_n(x_i)}L^{G_n}_2u_n(x_i)+\lambda (p-2)L^{G_n}_\infty u_n(x_i)=0, & \mbox{if } x_i\in\Omega_n\backslash\mathcal{O}, \\
    u_n(x_i)=y_i, & \mbox{if } x_i\in \mathcal{O},
  \end{cases}
\end{equation}
where $d_n(x_i)=\sum_{j=1}^nw_{i,j}$ and $\lambda>0$ is a constant.
This equation arises in two-player stochastic tug-of-war games \cite{peres2009tug} and has recently been studied in \cite{calder2018game,flores2022analysis} for semi-supervised learning with low sampling rates.
Figure \ref{fig:1-d} provides a numerical example.
Although equation \eqref{eq:1.5} and equation \eqref{eq:1.6} are different at the graph level, they are, after being appropriately rescaled, identical to the continuum $p$-Laplacian equation
\begin{equation}\label{eq:1.7}
    \begin{cases}
    \mbox{div}(\rho^2|\nabla u|^{p-2}\nabla u)=0, & \mbox{in } \Omega\backslash\mathcal{O}, \\
    u=f, & \mbox{on }  \mathcal{O},\\
  \frac{\partial u}{\partial \vec{n}}=0, & \mbox{on } \partial\Omega,
  \end{cases}
\end{equation}
in the limit $n\rightarrow\infty$ with $N$ fixed.
Here $p>d$ and the solution of equation \eqref{eq:1.7} is H\"{o}lder continuous, which ensures that the Dirichlet condition $u=f$ on $\mathcal{O}$ is well-defined.
The weight function $\rho$ is the probability density of the data.

In this paper, we consider the smoothness assumption in the hypergraph setting for semi-supervised learning.
A hypergraph is a generalization of a graph in the sense that each edge (called a hyperedge in a hypergraph) is a subset of the vertex set and can join any number of vertices.
If each hyperedge contains exactly two vertices, it becomes a graph.
Hypergraphs have attracted increasing attention in data science because of its flexibility in modeling higher-order relations in data with applications in recommender systems \cite{tan2011using}, image retrieval \cite{huang2010image}, bioinformatics \cite{patro2013predicting}, etc.

Let $H_n=(V_n, E)$ be an undirected hypergraph with vertex set $V_n=\Omega_n=\{x_i\}_{i=1}^n$ and hyperedge set $E=\{e_k\}_{k=1}^m$, where $e_k\subset V_n$.
Similar to the graph case,
the smoothness assumption asserts that vertices in the same hyperedge tend to share similar values.
This can be achieved by the hypergraph $p$-Laplacian regularization. We then find a solution $u_n$ by solving the constrained optimization problem
\begin{equation}\label{eq:1.2}
  \min_{u_n\in\mathbb{R}^n}
  F_p^{H_n}(u_n):=\sum_{k=1}^m\max_{x_i,x_j\in e_k}|u_n(x_i)-u_n(x_j)|^p, \quad \mbox{ s.t. $u_n=f$ on $\mathcal{O}$},
\end{equation}
where $1<p<\infty$.
The hypergraph $p$-Laplacian regularization $F_p^{H_n}$ with $p=1$ (i.e., the hypergraph total variation) was first proposed in \cite{hein2013total} for data clustering via the Lov\'{a}sz extension of the hypergraph cut.
It is then generalized to the case $p>1$ for semi-supervised learning.
In some applications, the hyperedge set $E$ is not given. It is required to construct the hyperedge set from the data set $V_n$.
The distance-based method \cite{shi2024hypergraph} is a commonly used approach.
More precisely,
\begin{equation}\label{eq:1.1}
  E=\{e_k\}_{k=1}^n,\quad e_k=\{x_j\in\Omega_n: |x_k-x_j|\leq \varepsilon_n\},
\end{equation}
for a constant $\varepsilon_n>0$.
With the hyperedge construction $\eqref{eq:1.1}$, problem \eqref{eq:1.2} is now defined on the point cloud $\Omega_n$.
A direct comparison between the graph learning \eqref{eq:1.5} and the hypergraph learning \eqref{eq:1.2} becomes possible.
It is shown that the hypergraph $2$-Laplacian \eqref{eq:1.2} outperforms the graph $2$-Laplacian \eqref{eq:1.5} and the hypergraph structure is beneficial for handling point cloud data \cite{shi2024hypergraph}. The main limitation of this approach lies in the complexity of the numerical algorithm and its high computational cost, due to the non-differentiability and large scale of the objective function $F_p^{H_n}$.

In \cite{shi2024hypergraph1},
a hypergraph equation of the following form is proposed as an alternative to \eqref{eq:1.2}:
\begin{align}\label{eq:1.3}
  \begin{cases}
    L^{H_n} u_n(x_i):=\sum\limits_{e\in E} \chi_e(x_i)
    \left(\max\limits_{x_j\in e}u_n(x_j)+\min\limits_{x_j\in e}u_n(x_j)-2u_n(x_i)\right)=0,\quad &x_i\in \Omega_{n}\backslash\mathcal{O},\\
    u_n(x_i)=y_i,\quad &x_i\in \mathcal{O},
  \end{cases}
\end{align}
where
\begin{align*}
  \chi_{e}(x_i)=
  \begin{cases}
    1,\quad &\mbox{if } x_i\in e, \\
    0,\quad & \mbox{if } x_i\notin e.
  \end{cases}
\end{align*}
The operator $L^{H_n}$ arises as an approximation and simplification of the multi-valued subgradient $\partial F_2^{H_n}$ of the hypergraph Laplacian $F_2^{H_n}$.
There exist simple hypergraphs for which a solution of equation \eqref{eq:1.3} is a minimizer of problem \eqref{eq:1.2} with $p=2$.
For this reason, $L^{H_n}$ is referred to as the hypergraph Laplacian in \cite{shi2024hypergraph1}.
In Figure \ref{fig:1}, a comparison of different graph and hypergraph models for a one-dimensional signal is given.
We observe that the spiky solution is suppressed by the hypergraph models.

The purpose of this paper is to give a theoretical study of equation \eqref{eq:1.3} with hyperedge \eqref{eq:1.1}.
We consider a more general discrete equation
\begin{align}\label{eq:1.4}
  \begin{cases}
    L^{H_n}_{p,\varepsilon_n}u_n(x_i)
    :=\sum\limits_{j=1}^n \eta\left(\frac{|x_i-x_j|}{\varepsilon_n}\right)
    \left(M^{k(p)\varepsilon_n} u_n(x_j)+M_{k(p)\varepsilon_n} u_n(x_j)-2u_n(x_i)\right)=0, &x_i\in \Omega_{n}\backslash\mathcal{O},\\
    u_n(x_i)=y_i, &x_i\in \mathcal{O}.
  \end{cases}
\end{align}
Here $\eta:[0,\infty)\rightarrow [0,\infty)$ is a nonnegative function with compact support $[0,1]$, $\varepsilon_n>0$ is a constant, $k(p)=\sqrt{c(p-2)}$ for $p\geq 2$ and $c>0$,
\begin{equation*}
  M^{r} u_n(x_i)=\max\limits_{x_j\in {B}(x_i,r)}u_n(x_j),\quad
  M_{r} u_n(x_i)=\min\limits_{x_j\in {B}(x_i,r)}u_n(x_j),
\end{equation*}
$B(x_i,r)\subset\Omega_n$ is a ball centered at $x_i$ with radius $r$.
The new operator $L^{H_n}_{p,\varepsilon_n}$ is a generalization of the hypergraph Laplacian $L^{H_n}$ and the graph Laplacian $L^{G_n}_2$.
If $\eta(s)\equiv 1$ for $|s|\leq 1$ and $k(p)\equiv1$, $L^{H_n}_{p,\varepsilon_n}$ coincides with $L^{H_n}$.
We recover $L^{G_n}_2$ from $L^{H_n}_{p,\varepsilon_n}$ by taking  $k(p)\equiv 0$.

We study the asymptotic behavior of the solution of equation \eqref{eq:1.4} in the setting when the number of labeled points $N$ is fixed and the number of data points $n\rightarrow\infty$.
This corresponds to the regime where the labeled data is limited and the sampling rate $\frac{N}{n}$ is arbitrarily low.
We show that the discrete operator $L^{H_n}_{p,\varepsilon_n}$ is consistent with a weighted $p$-Laplacian for smooth functions.
To address the Dirichlet boundary condition in \eqref{eq:1.4}, we establish the uniform  H\"{o}lder estimate for the solution of equation \eqref{eq:1.4} under the assumption $p>d$.
This is motivated by the fact that the $p$-Laplacian equation admits a solution in $W^{1,p}(\Omega)$, which is H\"{o}lder continuous if $p>d$.
Combining the consistency and the uniform  H\"{o}lder estimate, we prove in the viscosity solution framework that the continuum limit of equation \eqref{eq:1.4} is the $p$-Laplacian equation \eqref{eq:1.7}.
This suggests equation \eqref{eq:1.4} as a new discretization of the continuum $p$-Laplacian equation on point clouds.
It should be noticed that the solutions of equation \eqref{eq:1.4}, \eqref{eq:1.5}, and \eqref{eq:1.6} are quite different for finite $n$,
even although they share the same limit as $n\rightarrow\infty$.

By applying the result to equation \eqref{eq:1.3} with hyperedge \eqref{eq:1.1},
we deduce from $k(p)\equiv 1$ that $p=d+4$ and the assumption $p>d$ is fulfilled automatically.
Namely, the solution of equation \eqref{eq:1.3} converges to a H\"{o}lder continuous function as $n\rightarrow\infty$.
This partially explains the numerical experiments in \cite{shi2024hypergraph1}, as it prevents spikes better than the graph Laplacian.
We further compare the hypergraph equation \eqref{eq:1.4} with the graph Laplacian equation through numerical experiments on two high-dimensional data interpolation problems, namely image inpainting and semi-supervised learning. The experimental results demonstrate that the hypergraph equation outperforms the graph Laplacian equation at the extremely low labeling rates.

The definition of the hypergraph $p$-Laplacian is not unique. In addition to \eqref{eq:1.2}, several alternative definitions have been introduced for different applications \cite{saito2023generalizing,fazeny2024hypergraph,shi2025continuum}. The study of the continuum limits of hypergraph $p$-Laplacians is substantially more challenging and deserves further investigation. We refer the reader to \cite{shi2024hypergraph,shi2025continuum,weihs2025analysis} for some of the related work.

This paper is organized as follows.
In Section 2, we present assumptions and the main result of this paper.
The consistency between the hypergraph $p$-Laplacian operator $L^{H_n}_{p,\varepsilon_n}$ and the continuum $p$-Laplacian operator for smooth functions is given in Section 3. A nonlocal $p$-Laplacian operator is introduced as a bridge between the discrete and continuum settings.
In Section 4, we establish the H\"{o}lder estimate for the solution of the hypergraph $p$-Laplacian equation.
The main result, i.e., the continuum limit of equation \eqref{eq:1.4}, is then proven as a corollary of the consistency and the H\"{o}lder estimate.
Section 5 is devoted to numerical experiments. We conclude this paper in Section 6.

\begin{figure}[t]
\centering
\begin{subfigure}[t]{0.32\linewidth}
  \centering
  \includegraphics[width=\linewidth]{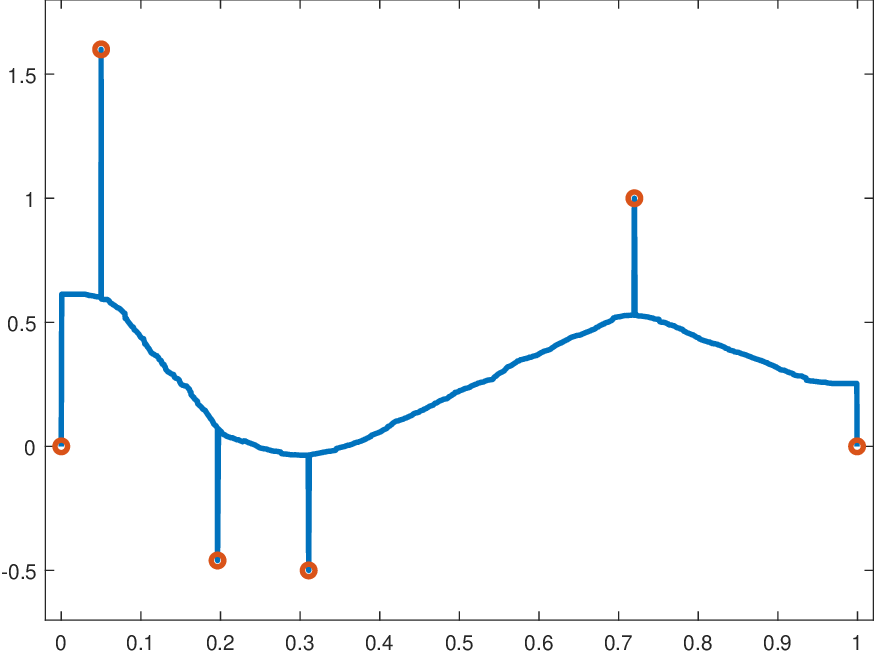}
  \caption{$L^{G_n}_2$}
  \label{fig:1-a}
\end{subfigure}\hfill
\begin{subfigure}[t]{0.32\linewidth}
  \centering
  \includegraphics[width=\linewidth]{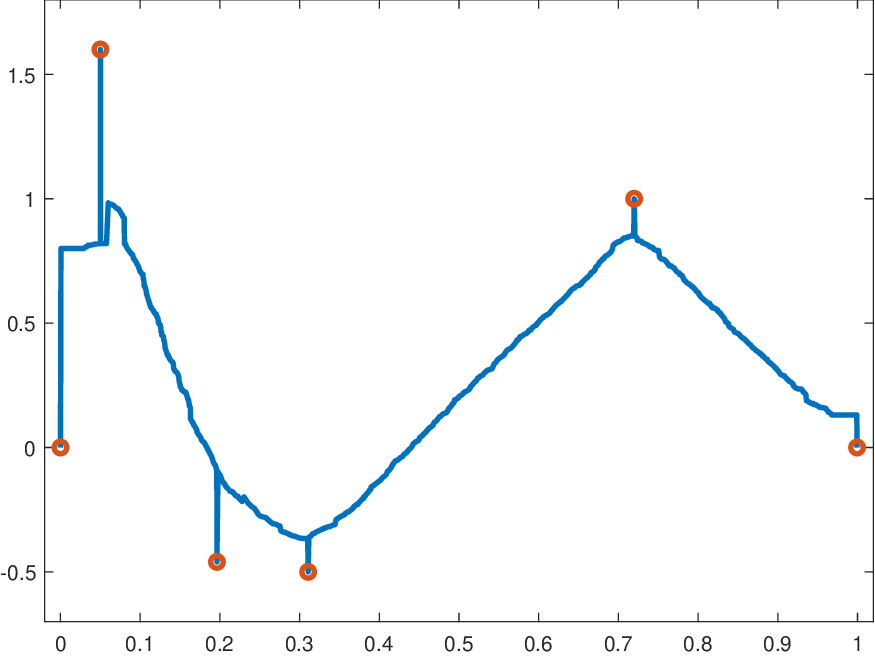}
  \caption{$L^{G_n}_4$}
  \label{fig:1-b}
\end{subfigure}\hfill
\begin{subfigure}[t]{0.32\linewidth}
  \centering
  \includegraphics[width=\linewidth]{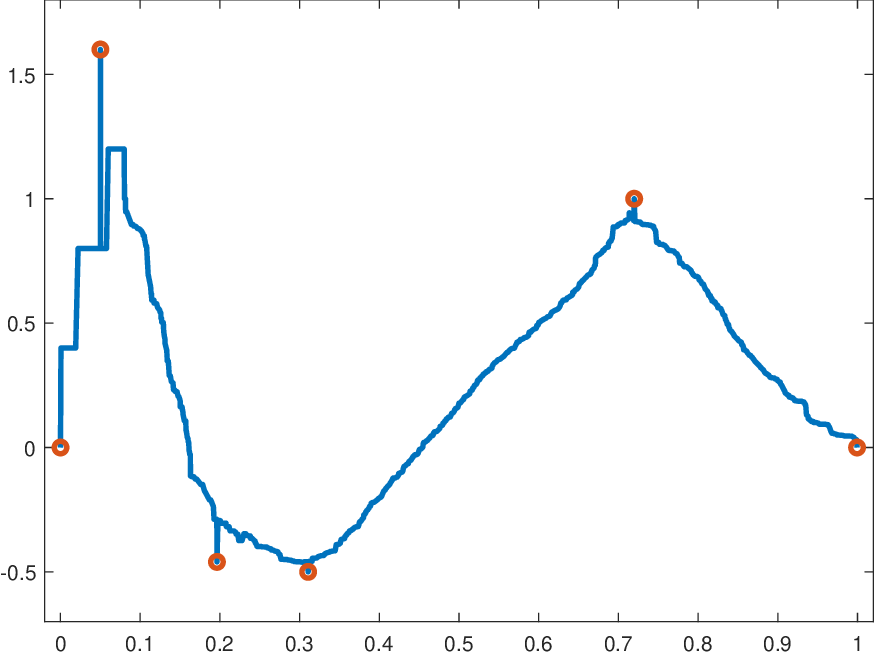}
  \caption{$L^{G_n}_{\infty}$}
  \label{fig:1-c}
\end{subfigure}

\vspace{0.6em}

\begin{subfigure}[t]{0.32\linewidth}
  \centering
  \includegraphics[width=\linewidth]{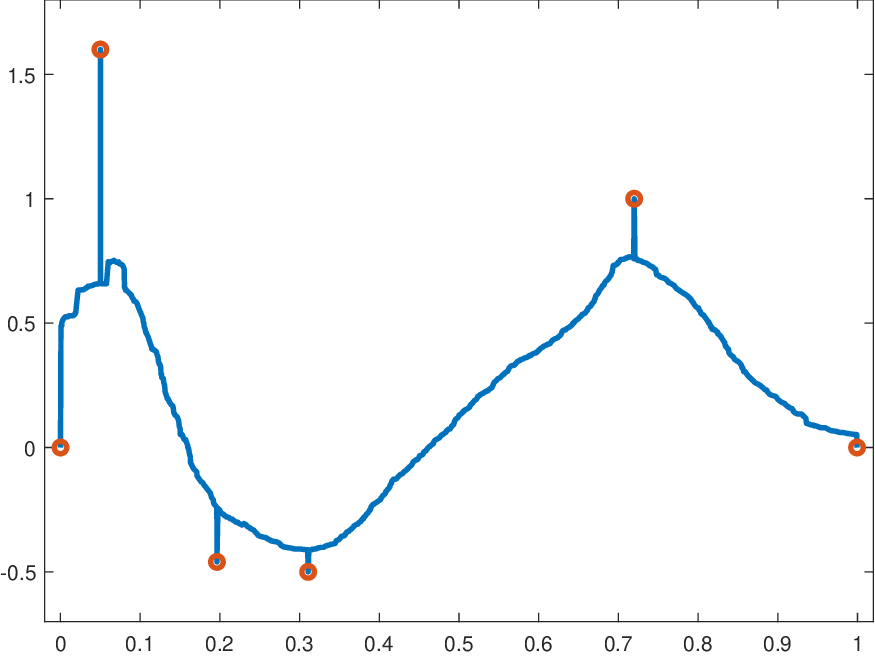}
  \caption{$\mathcal{L}^{G_n}_p$}
  \label{fig:1-d}
\end{subfigure}\hfill
\begin{subfigure}[t]{0.32\linewidth}
  \centering
  \includegraphics[width=\linewidth]{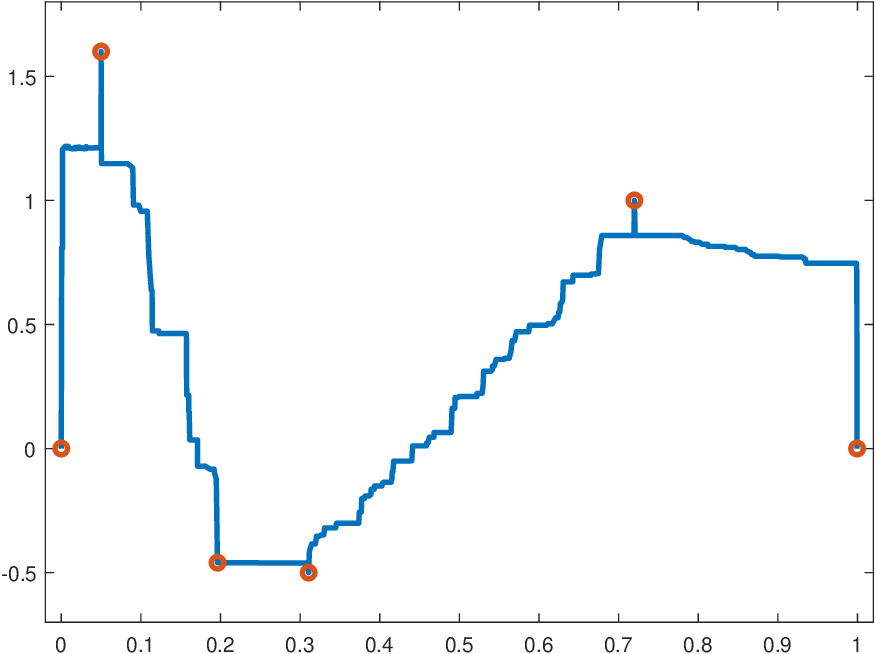}
  \caption{$F_2^{H_n}$}
  \label{fig:1-e}
\end{subfigure}\hfill
\begin{subfigure}[t]{0.32\linewidth}
  \centering
  \includegraphics[width=\linewidth]{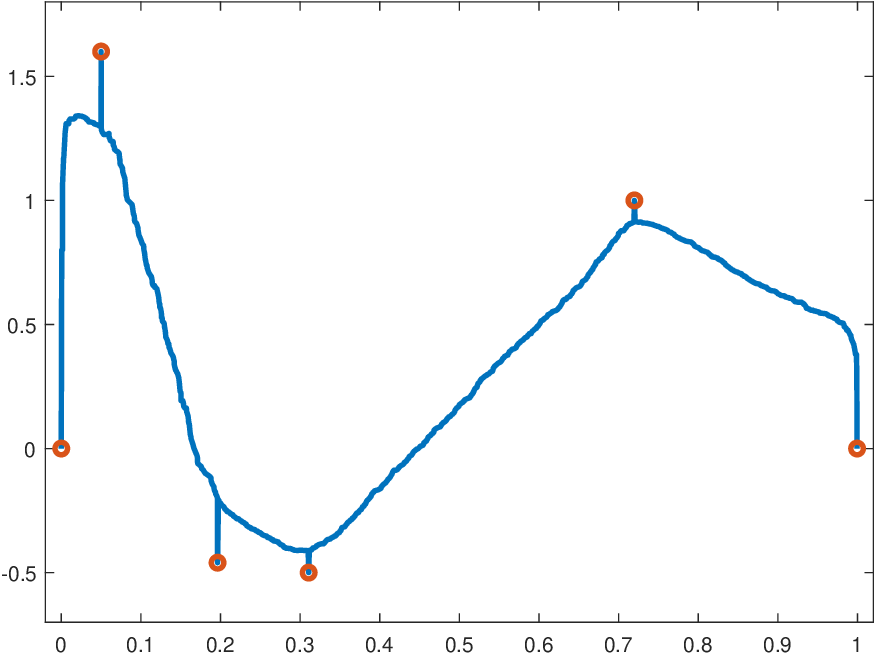}
  \caption{$L^{H_n}$}
  \label{fig:1-f}
\end{subfigure}

\caption{Interpolation results of the graph and hypergraph $p$-Laplacians for a one-dimensional signal with $n=1280$ points (drawn uniformly from the interval $[0,1]$) and $N=6$ labels (denoted by red circles). For each point, we select its nearest 72 points to construct edges or a hyperedge. The weights for the graph and the hypergraph are set to be 1.}
\label{fig:1}
\end{figure}

\section{Assumptions and the main result}

Let $\Omega\subset\mathbb{R}^d$ ($d\geq 2$) be a bounded, connected, and open domain with $C^1$ boundary $\partial\Omega$.
The unit outward normal on $\partial\Omega$ is denoted by $\vec{n}$.
We are given a set of points $\Omega_n=\{x_i\}_{i=1}^n\subset \Omega$ and a function $f$ defined on a subset $\mathcal{O}=\{x_i\}_{i=1}^N$ $(N<n)$. That is,
\begin{equation*}
  f:\mathcal{O}\rightarrow\R,\quad f(x_i)=y_i \mbox{ for } x_i\in \mathcal{O}.
\end{equation*}
Assume that $\Omega_n\backslash\mathcal{O}=\{x_i\}_{i=N+1}^n$ are independently and identically distributed random samples of a probability measure $\mu$ on $\Omega$ and $\mu$ has a Lebesgue density $\rho\in C^2(\overline{\Omega})$ with $\rho>0$ on $\overline{\Omega}$.
Without loss of generality, we also assume that the observed data $\mathcal{O}$ are far away from the boundary $\partial\Omega$.

Let $\eta: [0,\infty)\rightarrow [0,\infty)$ be a nonincreasing Lipschitz function with $\eta(0)\leq 2$, $\eta(1)\geq 1$, and
\begin{equation*}
  \eta_{\varepsilon_n}(s)=\frac{1}{\varepsilon_n^d}\eta\left(\frac{s}{\varepsilon_n}\right),
  \quad \eta^\chi_{\varepsilon_n}(s)=\chi_{\{|s|<\varepsilon_n\}}\eta_{\varepsilon_n}(s),
\end{equation*}
where $\chi_{\{|s|<\varepsilon_n\}}$ is the characteristic function of the set $\{|s|<\varepsilon_n\}$.
For $u_n\in \R^n$ and $x_i\in\Omega_{n}$, we define
\begin{equation*}
  M^{\varepsilon_n} u_n(x_i)=\max\limits_{x_j\in \overline{B}(x_i,\varepsilon_n)\cap{\Omega_n}}u_n(x_j),\quad
  M_{\varepsilon_n} u_n(x_i)=\min\limits_{x_j\in \overline{B}(x_i,\varepsilon_n)\cap{\Omega_n}}u_n(x_j),
\end{equation*}
where $B(x_i,\varepsilon_n)$ is a ball centered at $x_i$ with radius $\varepsilon_n$ and $\overline{B}(x_i,\varepsilon_n)$ is the closure of $B(x_i,\varepsilon_n)$.

The hypergraph $p$-Laplacian equation we consider for semi-supervised learning reads
\begin{align}\label{eq:2.1}
  \begin{cases}
    L^{H_n}_{p,\varepsilon_n}u_n(x_i)\\
    :=\frac{1}{2n\varepsilon_n^{2}}\sum\limits_{j=1}^n \eta^\chi_{\varepsilon_n}(|x_i-x_j|)\left(M^{k(p)\varepsilon_n} u_n(x_j)+M_{k(p)\varepsilon_n} u_n(x_j)-2u_n(x_i)\right)=0,\quad &x_i\in \Omega_{n}\backslash\mathcal{O},\\
    u_n(x_i)=y_i,\quad &x_i\in \mathcal{O},
  \end{cases}
\end{align}
where $p\geq 2$, $\frac{1}{2n\varepsilon_n^{2}}$ is the rescaling parameter,
\begin{equation}\label{eq:p}
  k(p)=\sqrt{\frac{(p-2)\sigma_{\eta,1}}{\sigma_{\eta,2}}},
\end{equation}
\begin{equation}\label{eq:eta}
  \sigma_{\eta,1}=\frac{1}{2}\int_{B(0,1)}\eta(|y|)|y_1|^2dy,
  \quad \sigma_{\eta,2}=\frac{1}{2}\int_{B(0,1)}\eta(|y|)dy,
\end{equation}
and $y_1$ is the first coordinate of $y$.
As mentioned in the introduction, $L^{H_n}_{p,\varepsilon_n}$ is a generalization of the hypergraph Laplacian.
We formally regard $L^{H_n}_{p,\varepsilon_n}$ as a discrete operator defined on the $\varepsilon_n$-ball hypergraph $H_n=\{V_n, E_n\}$ with $V_n=\Omega_n$ and
\begin{equation*}
  E_n=\{e_k\}_{k=1}^n,\quad e_k=\{x_j\in\Omega_n: |x_k-x_j|\leq \varepsilon_n\}.
\end{equation*}

The role of $k(p)$ will be clarified in Section \ref{se:3.1}, where we verify
by Taylor's expansion and optimal transportation that $L^{H_n}_{p,\varepsilon_n}$ is pointwisely consistent with a continuum weighted and normalized $(\frac{k(p)^2\sigma_{\eta,2}}{\sigma_{\eta,1}}+2)$-Laplacian operator for smooth functions as $\varepsilon_n\rightarrow 0$.
If $p=2$, i.e., $k(p)=0$, $L^{H_n}_{p,\varepsilon_n}$ is the well-known graph Laplacian.
For notational convenience, we sometimes use $k$ instead of $k(p)$ in the following, but keep in mind that it is always defined by \eqref{eq:p}.

The unique solvability of equation \eqref{eq:2.1} and the comparison principle have been studied in \cite{shi2024hypergraph1} when $k=1$ and $\eta$ is a constant function. The proofs apply to general $k\geq 0$ and to $\eta$ satisfying the assumptions above. We present the results in the following.
\begin{theorem}\label{th:hypergraph_existence}
  Let $p\geq 2$, $n>N>0$ be fixed, and $\varepsilon_n$ be large such that the hypergraph $H_n$ is connected. Then equation \eqref{eq:2.1} admits a unique solution $u_n\in\R^n$.
\end{theorem}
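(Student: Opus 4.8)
I would treat existence and uniqueness separately, reducing the problem to a fixed-point statement for a monotone averaging map and then invoking a comparison principle for uniqueness. First I would rewrite each interior equation $L^{H_n}_{p,\varepsilon_n}u_n(x_i)=0$ in self-map form. Since $\eta$ is nonincreasing with $\eta(1)\geq 1$, the weight $d_i:=\sum_{j}\eta^\chi_{\varepsilon_n}(|x_i-x_j|)$ is strictly positive (the term $j=i$ alone contributes $\eta(0)/\varepsilon_n^d>0$), so the interior equation at $x_i$ is equivalent to
\begin{equation*}
  u_n(x_i)=(Tu_n)(x_i):=\frac{1}{2d_i}\sum_{j=1}^n\eta^\chi_{\varepsilon_n}(|x_i-x_j|)\bigl(M^{k\varepsilon_n}u_n(x_j)+M_{k\varepsilon_n}u_n(x_j)\bigr),
\end{equation*}
while I set $(Tu_n)(x_i)=y_i$ on $\mathcal{O}$. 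A solution of \eqref{eq:2.1} is then exactly a fixed point of $T$.

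\textbf{Existence.} I would show $T$ maps the compact convex set $K:=\{u\in\R^n: u=f \text{ on }\mathcal{O},\ c_1\leq u\leq c_2\}$ into itself, where $c_1=\min_{x_i\in\mathcal{O}}y_i$ and $c_2=\max_{x_i\in\mathcal{O}}y_i$. Indeed, for $u\in K$ every value $M^{k\varepsilon_n}u(x_j)$ and $M_{k\varepsilon_n}u(x_j)$ lies in $[c_1,c_2]$, and since $\sum_j\eta^\chi_{\varepsilon_n}(|x_i-x_j|)=d_i$, the interior value $(Tu)(x_i)$ is a convex combination of numbers in $[c_1,c_2]$, hence again in $[c_1,c_2]$. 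As $M^{k\varepsilon_n}$ and $M_{k\varepsilon_n}$ depend continuously on $u$, the map $T$ is continuous, and Brouwer's fixed-point theorem yields a fixed point $u_n\in K$, i.e. a solution of \eqref{eq:2.1}. (Alternatively, $T$ is order-preserving and $\ell^\infty$-nonexpansive, so the monotone iterates starting from the constant state $c_2$ decrease to a fixed point.)

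\textbf{Uniqueness.} I would establish a comparison principle and apply it to two solutions. Let $u,v$ solve \eqref{eq:2.1}, set $w=u-v$ and $M=\max_i w(x_i)$, and suppose $M>0$. Since $w=0$ on $\mathcal{O}$, the maximizer set $S=\{x_i:w(x_i)=M\}$ lies in $\Omega_n\backslash\mathcal{O}$. Fixing $x_{i_0}\in S$ and subtracting the two equations at $x_{i_0}$ gives
\begin{equation*}
  \sum_{j=1}^n\eta^\chi_{\varepsilon_n}(|x_{i_0}-x_j|)\Bigl[\bigl(M^{k\varepsilon_n}u(x_j)-M^{k\varepsilon_n}v(x_j)\bigr)+\bigl(M_{k\varepsilon_n}u(x_j)-M_{k\varepsilon_n}v(x_j)\bigr)-2M\Bigr]=0.
\end{equation*}
Because the max and min operators are $1$-Lipschitz in the sup-norm, $M^{k\varepsilon_n}u(x_j)-M^{k\varepsilon_n}v(x_j)\leq M$ and $M_{k\varepsilon_n}u(x_j)-M_{k\varepsilon_n}v(x_j)\leq M$, so every bracket is $\leq 0$; as $\eta^\chi_{\varepsilon_n}\geq 0$, each bracket carrying positive weight must vanish. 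The forced equality $M^{k\varepsilon_n}u(x_j)-M^{k\varepsilon_n}v(x_j)=M$ then makes the maximizer of $u$ over $\overline{B}(x_j,k\varepsilon_n)$ belong to $S$, and likewise the minimizer of $v$, which lets me propagate membership in $S$ along neighbouring vertices. Iterating across the hyperedges and invoking the connectivity of $H_n$, I would show that $S$ must reach $\mathcal{O}$, contradicting $S\cap\mathcal{O}=\emptyset$. Hence $M\leq 0$; the symmetric argument gives $v-u\leq 0$, so $u=v$.

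\textbf{Main obstacle.} The delicate point is the propagation step in the comparison principle: converting the pointwise equalities forced at a maximizer into a genuine spread of the set $S$ across the hypergraph until it meets $\mathcal{O}$. This requires carefully tracking how the $\varepsilon_n$-ball appearing in $\eta^\chi_{\varepsilon_n}$ interacts with the $k\varepsilon_n$-ball defining $M^{k\varepsilon_n}$ and $M_{k\varepsilon_n}$ (including the degenerate case $k=0$, where the scheme reduces to the graph Laplacian and the argument becomes the classical discrete maximum principle), and using connectivity in precisely the right form. Everything else — positivity of $d_i$, invariance of $K$, and continuity of $T$ — is routine.
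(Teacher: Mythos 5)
Your existence argument is correct and complete: the diagonal term gives $d_i\geq \varepsilon_n^{-d}\eta(0)>0$ (indeed $\eta(0)\geq\eta(1)\geq 1$), the set $K$ is invariant because each interior value of $Tu$ is a convex combination of midpoints $\frac{1}{2}\bigl(M^{k\varepsilon_n}u(x_j)+M_{k\varepsilon_n}u(x_j)\bigr)\in[c_1,c_2]$, and $T$ is continuous, so Brouwer (or your monotone iteration from the constant state) applies. For the record, the paper itself contains no proof of this theorem: it imports both the unique solvability and the comparison principle (Proposition \ref{pr:comparison_principle_hypergraph}) from \cite{shi2024hypergraph1}, so your proposal is in effect an attempt at reconstructing that cited proof, and the uniqueness half is where it breaks.

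The gap is precisely the propagation step you flagged, and it is not a matter of ``carefully tracking radii'': for $k>0$ the argument as sketched fails structurally. Subtracting the equations at $x_{i_0}\in S$ forces, for each neighbour $x_j$ with positive weight, $M^{k\varepsilon_n}u(x_j)-M^{k\varepsilon_n}v(x_j)=M$ and $M_{k\varepsilon_n}u(x_j)-M_{k\varepsilon_n}v(x_j)=M$; from this one correctly concludes that every maximizer of $u$ and every minimizer of $v$ over $\overline{B}(x_j,k\varepsilon_n)\cap\Omega_n$ belongs to $S$. But this does \emph{not} give $w(x_j)=M$ at the neighbours --- the operator never evaluates $u(x_j)$ pointwise, only ball extrema --- so the classical discrete maximum-principle propagation you are implicitly invoking is available only in the degenerate case $k=0$. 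For $k>0$ the new points of $S$ sit at uncontrolled locations inside the $k\varepsilon_n$-balls (when $k\geq 1$ they may even coincide with $x_{i_0}$ itself, since $x_{i_0}\in\overline{B}(x_j,k\varepsilon_n)$), so they need not advance toward $\mathcal{O}$. Concretely, if $q_0,q_1,\dots,q_L$ is a connecting chain from $S$ to $\mathcal{O}$ with $|q_{l+1}-q_l|\leq\varepsilon_n$, continuing the argument at step $l$ requires an $S$-point within $\varepsilon_n$ of $q_{l+1}$, whereas the forced equalities only produce $S$-points within $k\varepsilon_n$ of $q_l$, i.e.\ within $(1+k)\varepsilon_n$ of the previous one; the induction does not close, and connectivity of $H_n$ alone does not force $S\cap\mathcal{O}\neq\emptyset$. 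The underlying difficulty is essential: in the tug-of-war-with-noise interpretation of the scheme (random $\eta$-weighted step, then a coin-flip max/min move within a $k\varepsilon_n$-ball), the process need not be absorbed in $\mathcal{O}$ under arbitrary strategies, so no naive absorption or maximum-set-spreading argument can work. A correct comparison proof needs an additional mechanism --- e.g.\ an extremal selection inside the argmax/argmin sets, or perturbing $v$ to a \emph{strict} supersolution so that the equality case at a maximizer is excluded outright, in the spirit of comparison proofs for the game-theoretic $p$-Laplacian \cite{calder2018game} --- and your proposal supplies none of these.
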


\begin{proposition}\label{pr:comparison_principle_hypergraph}
Let $p\geq 2$,  $n>N>0$ be fixed, and $\varepsilon_n$ be large such that the hypergraph $H_n$ is connected.
  Assume that $u_n,v_n: \Omega_n\rightarrow\R$ and
  \begin{equation*}
    L^{H_n}_{p,\varepsilon_n}u_n(x_i)\geq L^{H_n}_{p,\varepsilon_n}v_n(x_i), \quad \mbox{for any } x_i\in\Omega_{n}\backslash\mathcal{O}.
  \end{equation*}
  Then
  \begin{equation*}
    \max_{\Omega_n}(u_n-v_n)=\max_{\mathcal{O}}(u_n-v_n).
  \end{equation*}
  In particular, if $u_n\leq v_n$ on $\mathcal{O}$, then $u_n\leq v_n$ on $\Omega_n$.
\end{proposition}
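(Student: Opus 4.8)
The plan is to argue by a discrete maximum principle for $w := u_n - v_n$. I would set $M := \max_{\Omega_n} w$, attained on the nonempty set $S := \{x_i \in \Omega_n : w(x_i) = M\}$, and aim to show $S \cap \mathcal{O} \neq \emptyset$; this gives $\max_{\mathcal{O}} w = M = \max_{\Omega_n} w$ (the inequality $\max_{\mathcal{O}} w \le \max_{\Omega_n} w$ being trivial), and the ``in particular'' assertion then follows by applying the identity to the case $u_n \le v_n$ on $\mathcal{O}$, where $\max_{\mathcal{O}} w \le 0$ forces $u_n \le v_n$ on all of $\Omega_n$. So I would suppose for contradiction that $S \cap \mathcal{O} = \emptyset$, placing every maximizer in the interior $\Omega_n \setminus \mathcal{O}$ where the differential inequality holds.

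The core step is an order-preservation computation at an interior maximizer $x_{i_0} \in S$. Since $w \le M$ on $\Omega_n$, we have $u_n \le v_n + M$ pointwise, and because $M^{k\varepsilon_n}$ and $M_{k\varepsilon_n}$ are monotone in their argument, $M^{k\varepsilon_n}u_n(x_j) \le M^{k\varepsilon_n}v_n(x_j) + M$ and $M_{k\varepsilon_n}u_n(x_j) \le M_{k\varepsilon_n}v_n(x_j) + M$ for every $x_j$. Using $u_n(x_{i_0}) = v_n(x_{i_0}) + M$ \emph{exactly}, each summand of $L^{H_n}_{p,\varepsilon_n}u_n(x_{i_0})$ is dominated by the corresponding summand of $L^{H_n}_{p,\varepsilon_n}v_n(x_{i_0})$; as the weights $\eta^\chi_{\varepsilon_n}(\abs{x_{i_0}-x_j}) \ge 0$, this yields $L^{H_n}_{p,\varepsilon_n}u_n(x_{i_0}) \le L^{H_n}_{p,\varepsilon_n}v_n(x_{i_0})$. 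Combined with the hypothesis $L^{H_n}_{p,\varepsilon_n}u_n(x_{i_0}) \ge L^{H_n}_{p,\varepsilon_n}v_n(x_{i_0})$, equality holds, forcing termwise equality in every summand with positive weight, i.e. for every $x_j$ with $\abs{x_{i_0}-x_j} < \varepsilon_n$ (where $\eta > 0$ by $\eta(1)\ge 1$ and monotonicity). Each such equality splits into $M^{k\varepsilon_n}u_n(x_j) = M^{k\varepsilon_n}v_n(x_j) + M$ and $M_{k\varepsilon_n}u_n(x_j) = M_{k\varepsilon_n}v_n(x_j) + M$. From the first, the point of $\overline{B}(x_j,k\varepsilon_n)\cap\Omega_n$ maximizing $u_n$ satisfies $w = M$, hence lies in $S$; from the second, the point minimizing $v_n$ over that ball also lies in $S$. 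Thus from any interior maximizer the equality condition produces further elements of $S$ inside the neighboring $k\varepsilon_n$-balls. When $k = k(p) = 0$ (the graph Laplacian, $p=2$) this degenerates to the transparent statement that every $\varepsilon_n$-neighbor of a maximizer is again a maximizer, so that by connectivity of $H_n$ one immediately gets $S = \Omega_n$ and a contradiction.

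For general $k>0$ I would then invoke the connectivity of $H_n$: because $H_n$ is connected and $\mathcal{O}\neq\emptyset$, a set $S$ avoiding $\mathcal{O}$ cannot be stable under propagation through the overlapping hyperedge balls, so iterating the mechanism above must eventually expose a maximizer at a vertex of $\mathcal{O}$, contradicting $S\cap\mathcal{O}=\emptyset$ and completing the proof. I expect this connectivity-plus-propagation step to be the main obstacle. The difficulty is that, unlike the linear case $k=0$, the nonlinearity of the $\max$/$\min$ operators means the equality at $x_{i_0}$ pins down only the \emph{extremal} points of each $k\varepsilon_n$-ball rather than the values of $w$ at all $\varepsilon_n$-neighbors; reconciling the coupling radius $\varepsilon_n$, the averaging radius $k\varepsilon_n$, and the hypergraph connectivity into a rigorous statement that ``the maximum must spread until it reaches $\mathcal{O}$'' is the delicate part, and is where I would follow the structure of the argument in \cite{shi2024hypergraph1} for $k=1$, checking that it goes through verbatim for general $k\ge 0$ and for $\eta$ satisfying the stated assumptions.
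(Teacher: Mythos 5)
The first half of your argument is correct and matches the standard opening of such discrete comparison proofs: the monotonicity of $M^{k\varepsilon_n}$, $M_{k\varepsilon_n}$ gives $L^{H_n}_{p,\varepsilon_n}u_n(x_{i_0})\leq L^{H_n}_{p,\varepsilon_n}v_n(x_{i_0})$ at any maximizer $x_{i_0}$ of $w=u_n-v_n$ with $w(x_{i_0})=M$, and since every summand difference is nonpositive while the weighted sum is nonnegative by hypothesis, termwise equality follows for all $x_j$ with $|x_{i_0}-x_j|<\varepsilon_n$ (where indeed $\eta\geq\eta(1)\geq 1>0$). Note, however, that the paper itself does not prove this proposition: it states that the proofs from \cite{shi2024hypergraph1} (for $k=1$, constant $\eta$) carry over, so the burden of your blind attempt was precisely to supply the argument you instead defer back to that reference.

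And that is where the genuine gap lies: your propagation step fails as sketched. You claim that from $M^{k\varepsilon_n}u_n(x_j)=M^{k\varepsilon_n}v_n(x_j)+M$ the ball $\overline{B}(x_j,k\varepsilon_n)\cap\Omega_n$ acquires \emph{new} elements of $S$, and that connectivity of $H_n$ then forces $S$ to reach $\mathcal{O}$. But for $k\geq 1$ (the case of interest, e.g.\ $k(p)=1$ for the hypergraph Laplacian), $x_{i_0}$ itself lies in $\overline{B}(x_j,k\varepsilon_n)$ for \emph{every} neighbor $x_j$, so the extremal points your equalities produce may all coincide with $x_{i_0}$: if $u_n$ peaks and $v_n$ dips appropriately near $x_{i_0}$, the argmax/argmin conditions are satisfied by points already in $S$, and $S$ need not grow at all. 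Unlike the $k=0$ case, $S$ is not closed under $\varepsilon_n$-adjacency, so "connected $H_n$ plus a nonempty invariant set" yields no contradiction by itself --- the sentence "a set $S$ avoiding $\mathcal{O}$ cannot be stable under propagation" is exactly the assertion to be proved, and it is left unargued. A repair must exploit \emph{both} equalities jointly: from the max-equality the $u_n$-argmax $x^*$ of the ball satisfies $v_n(x^*)=M^{k\varepsilon_n}v_n(x_j)$ as well (so $x^*$ is a simultaneous argmax of $u_n$ and $v_n$ with $w(x^*)=M$), and symmetrically for the min; one then extremizes over $S$ (e.g.\ pick $x_{i_0}\in S$ maximizing $u_n$) and uses the dichotomy that either a genuinely new point of $S$ appears or the argmax and argmin collapse, forcing $u_n$ and $v_n$ to be constant on the whole ball and hence $w\equiv M$ there, which does enlarge $S$. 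Organizing this into an induction that provably reaches $\mathcal{O}$ is the actual content of the proposition, and your proposal stops exactly before it.
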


We now consider the continuum limit of equation \eqref{eq:2.1}, which is the following weighted $p$-Laplacian equation with the mixed Dirichlet and Neumann boundary conditions,
\begin{align}\label{eq:2.2}
\begin{cases}
  \Delta^\rho_p u:=\mbox{div}\left(\rho^2|\nabla u|^{p-2}\nabla u\right)=0, & \mbox{in } \Omega\backslash\mathcal{O}, \\
  u=f, & \mbox{on }  \mathcal{O}, \\
  \frac{\partial u}{\partial \vec{n}}=0, & \mbox{on } \partial\Omega.
\end{cases}
\end{align}
Notice that $\mathcal{O}$ is a set of finite points. The Dirichlet boundary condition is well-defined only when the solution is continuous.
According to the classical theory of the $p$-Laplacian equation, it requires that $p>d$.

Since the hypergraph $p$-Laplacian equation \eqref{eq:2.1} does not have a weak formulation,
it is convenient to consider the problem in the viscosity solution framework.
Let $USC(\overline{\Omega})$ and $LSC(\overline{\Omega})$ be the set of upper semicontinuous functions and the set of lower semicontinuous functions on $\overline{\Omega}$, respectively.
We recall the definition of the viscosity solution for equation \eqref{eq:2.2} \cite{juutinen2001equivalence}.
\begin{definition}
  A function $u\in USC(\overline{\Omega})$ is a viscosity subsolution of equation
  \begin{align}\label{eq:2.3}
    \begin{cases}
      -\Delta^\rho_p u= 0,\quad \mbox{in } \Omega\backslash\mathcal{O},\\
      \frac{\partial u}{\partial\vec{n}}=0, \quad \mbox{on } \partial\Omega,
    \end{cases}
  \end{align}
  if for any $\varphi\in C^2(\overline{\Omega})$  and $x_0\in \overline{\Omega}\backslash\mathcal{O}$  such that
  $u-\varphi$ attains a strict maximum at $x_0$ and $\nabla\varphi(x_0)\neq 0$, one has
  \begin{equation*}
    -\Delta^\rho_p \varphi(x_0)\leq 0,\quad \mbox{if } x_0\in\Omega,
  \end{equation*}
  and
  \begin{equation*}
  \min\left\{-\Delta^\rho_p \varphi(x_0), \frac{\partial\varphi}{\partial\vec{n}}(x_0)\right\}\leq 0, \quad \mbox{if } x_0\in\partial\Omega.
  \end{equation*}
  A function $v\in LSC(\overline{\Omega})$ is a viscosity supersolution of the equation if $-v$ is a subsolution of \eqref{eq:2.3}.
  A function $u\in C(\overline{\Omega})$ is a viscosity solution of \eqref{eq:2.3} if it is both a viscosity subsolution and a viscosity supersolution.
\end{definition}

The unique solvability of equation \eqref{eq:2.2} is stated as follows. We omit the proof here, since the proof for the Dirichlet problem \cite{calder2018game} can be adapted for equation \eqref{eq:2.2} directly.

\begin{theorem}\label{th:existence}
  Let $p>d$. Equation \eqref{eq:2.2} admits a unique viscosity solution $u\in C(\overline{\Omega})$ in the sense that $u$ is a viscosity solution of equation \eqref{eq:2.3} and $u=f$ on $\mathcal{O}$.
\end{theorem}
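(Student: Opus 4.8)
The plan is to run the standard viscosity-solution program: derive uniqueness from a comparison principle and existence from Perron's method, importing the arguments of \cite{calder2018game} for the pure interior-Dirichlet problem and modifying them to accommodate the homogeneous Neumann condition on $\partial\Omega$. Two features of \eqref{eq:2.2} deserve attention, and both are tractable precisely because $p>d$: the pointwise Dirichlet data on the finite set $\mathcal{O}$, and the boundary condition $\frac{\partial u}{\partial\vec n}=0$. Throughout, the degeneracy of $\Delta^\rho_p$ at critical points is harmless, because the viscosity definition only tests against $\varphi$ with $\nabla\varphi(x_0)\neq 0$, where the operator expands as $\rho^2|\nabla\varphi|^{p-2}\big(\mathrm{tr}\,[(I+(p-2)\hat n\otimes\hat n)D^2\varphi]\big)+|\nabla\varphi|^{p-2}\nabla(\rho^2)\cdot\nabla\varphi$ with $\hat n=\nabla\varphi/|\nabla\varphi|$. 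Since $\rho\in C^2(\overline\Omega)$ and $\rho>0$ on the compact set $\overline\Omega$, the coefficients are smooth and the matrix $I+(p-2)\hat n\otimes\hat n$ is uniformly elliptic away from $\{\nabla\varphi=0\}$, so the structural hypotheses of the Crandall--Ishii--Lions theory are met.

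For uniqueness I would show that if $u$ is a viscosity subsolution and $v$ a viscosity supersolution of \eqref{eq:2.3} with $u\leq v$ on $\mathcal{O}$, then $u\leq v$ on $\overline\Omega$. Suppose not, so that $\max_{\overline\Omega}(u-v)>0$. Because $u-v$ is upper semicontinuous on the compact set $\overline\Omega$ and $u\leq v$ on $\mathcal{O}$, this positive maximum is attained at some $x_0\in\overline\Omega\setminus\mathcal{O}$; in particular the singular points play no role in the comparison argument. I would then double variables with the penalization $\frac{\alpha}{2}|x-y|^2$, apply the theorem on sums, and let $\alpha\to\infty$. When $x_0\in\Omega$ this produces the usual contradiction with degenerate ellipticity; when $x_0\in\partial\Omega$ the convexity of $\Omega$ guarantees $\langle x-y,\vec n(x)\rangle\geq 0$ and $\langle y-x,\vec n(y)\rangle\geq 0$ for the maximizing pair, so the Neumann branch of $\min\{-\Delta^\rho_p\varphi,\frac{\partial\varphi}{\partial\vec n}\}\leq 0$ cannot be the active one, and the remaining interior inequalities again contradict ellipticity. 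This is essentially the comparison principle of \cite{calder2018game}, with the Neumann term inserted via convexity.

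For existence I would use Perron's method. Let $\mathcal{A}$ be the family of viscosity subsolutions $w$ of \eqref{eq:2.3} with $w\leq f$ on $\mathcal{O}$, and set $u=\sup_{w\in\mathcal{A}}w$. The standard closedness-under-suprema and bump-function arguments show that $u^*$ is a subsolution and $u_*$ a supersolution, and the comparison principle then forces $u^*=u_*=u$, so $u\in C(\overline\Omega)$ solves \eqref{eq:2.3}; the Neumann condition is natural and is inherited automatically by the Perron solution. The only place $p>d$ is needed is to guarantee that $u$ attains the data continuously at each $x_i\in\mathcal{O}$, i.e. $u(x_i)=y_i$. Here I would use the radial $p$-harmonic profile $x\mapsto y_i\pm c\,|x-x_i|^{\beta}$ with $\beta=\frac{p-d}{p-1}\in(0,1)$: it is a genuine one-sided solution of the constant-coefficient $p$-Laplacian away from $x_i$, and since $0<\rho_{\min}\leq\rho\leq\rho_{\max}$ and the gradient of the profile blows up like $r^{\beta-1}$ near $x_i$ (so the principal $p$-Laplacian term dominates the lower-order $\nabla(\rho^2)$ contribution), a slight adjustment of $c$ makes it a sub/supersolution of the weighted equation on a small punctured ball. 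These barriers pin $u(x_i)=y_i$; the positivity $\beta>0$, equivalently $p>d$, is exactly what makes the profile continuous at $x_i$ rather than unbounded, as in the logarithmic case $p=d$.

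The main obstacle I anticipate is this barrier construction at $\mathcal{O}$ in the presence of the weight $\rho^2$: one must verify that the radial profiles remain one-sided solutions of $\Delta^\rho_p$ after the weight is switched on, controlling the lower-order term uniformly near the singularity, and then match them to admissible global competitors in $\mathcal{A}$. The comparison argument up to the convex Neumann boundary is comparatively routine given \cite{calder2018game}; what makes the two singular features coexist cleanly is the earlier observation that the extremal of $u-v$ necessarily avoids $\mathcal{O}$, so that the punctures interact with the Perron solution only through the barriers and not through the comparison step.
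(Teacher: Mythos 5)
Your overall route is exactly the one the paper intends: the paper omits the proof of Theorem \ref{th:existence}, stating that the argument of \cite{calder2018game} for the Dirichlet problem adapts directly, and that argument is precisely your scheme---comparison principle plus Perron's method, with H\"older cone barriers at the labeled points pinning the data (where $p>d$ enters), and the Neumann condition handled through the convexity of $\Omega$. The same cone profiles with exponent below $\frac{p-d}{p-1}$ drive the paper's own discrete H\"older estimate (Lemma \ref{le:holder_1}), so your plan is faithful to the intended adaptation.

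There is, however, one step that fails as literally written: the barrier construction at $\mathcal{O}$. You fix the exponent $\beta=\frac{p-d}{p-1}$, for which $v(x)=|x-x_i|^{\beta}$ is \emph{exactly} $p$-harmonic away from $x_i$ for the unweighted operator, and then claim the principal $p$-Laplacian term dominates the lower-order $\nabla(\rho^2)$ contribution so that ``a slight adjustment of $c$'' makes $cv$ a one-sided solution of the weighted equation. With $\beta$ exactly $\frac{p-d}{p-1}$ the principal term vanishes identically, so there is nothing to dominate: the sign of $\Delta^\rho_p(cv)$ is that of $|\nabla v|^{p-2}\nabla(\rho^2)\cdot\nabla v$, which is not signed in general; and since $\Delta^\rho_p$ is $(p-1)$-homogeneous, $\Delta^\rho_p(cv)=c^{p-1}\Delta^\rho_p v$, no choice of $c$ (nor shrinking of the ball) can repair the sign. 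The correct fix is to take any exponent $\alpha$ with $0<\alpha<\frac{p-d}{p-1}$ strictly: a radial computation gives $\Delta_p(|x-x_i|^\alpha)=\alpha^{p-1}\bigl((\alpha-1)(p-1)+d-1\bigr)r^{(\alpha-1)(p-1)-1}<0$, while the weight term is only $O\bigl(r^{(\alpha-1)(p-1)}\bigr)$, one order better in $r$, so on a sufficiently small punctured ball the profile is a strict supersolution of $\Delta^\rho_p$ (and its negative a strict subsolution); the constraint $\alpha>0$, i.e.\ $p>d$, still guarantees continuity at $x_i$. This is exactly the strict inequality $0<\alpha<\frac{p-d}{p-1}$ the paper imposes in Lemma \ref{le:holder_1}. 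With that correction your barriers pin $u(x_i)=y_i$, and the remainder of your plan---the positive extremum of $u-v$ avoiding $\mathcal{O}$, the Neumann branch excluded at convex boundary points via $\langle x-y,\vec{n}(x)\rangle\geq 0$ (modulo the standard tie-breaking perturbation when this inner product vanishes, and the usual care when the doubled test gradient vanishes, both handled as in \cite{calder2018game,juutinen2001equivalence}), and the Perron closure---goes through.
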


The main result of this paper is the convergence for the solution of the hypergraph $p$-Laplacian equation \eqref{eq:2.1} to the solution of the continuum $p$-Laplacian equation \eqref{eq:2.2}.

\begin{theorem}\label{th:convergence}
  Let $p>d$,
      \begin{align}\label{delta}
    \delta_n=
      \begin{cases}
        \frac{(\ln n)^{3/4}}{\sqrt{n}}, & \mbox{if } d=2, \\
        \frac{(\ln n)^{1/d}}{n^{1/d}}, & \mbox{if  } d\geq 3,
      \end{cases}
    \end{align}
   and $\sqrt{\delta_n}\ll \varepsilon_n\ll 1$.
  If $u_n$ is the solution of equation \eqref{eq:2.1} and $u$ is the solution of equation \eqref{eq:2.2}, then  with probability one,
  \begin{equation*}
    u_n \rightarrow u, \quad \mbox{uniformly in } \Omega,
  \end{equation*}
  as $n\rightarrow \infty$.
\end{theorem}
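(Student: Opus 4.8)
The plan is to run the Barles--Souganidis half-relaxed limit method, feeding it the two ingredients advertised for Sections 3 and 4: the pointwise consistency of $L^{H_n}_{p,\varepsilon_n}$ with $\Delta^\rho_p$ on smooth functions, and the uniform H\"older estimate for the solutions $u_n$. The randomness is dealt with once and for all at the outset: under the stated scaling $\sqrt{\delta_n}\ll\varepsilon_n\ll1$, the point cloud $\Omega_n$ is $\delta_n$-dense in $\overline\Omega$ and the empirical sums $\frac1n\sum_j\eta^\chi_{\varepsilon_n}(|x_i-x_j|)(\cdots)$ concentrate around their $\rho$-weighted integrals; each of these holds for all large $n$ with probability one by a concentration estimate and Borel--Cantelli. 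On this full-probability event I define the upper and lower relaxed limits
\[
\overline u(x)=\limsup_{\substack{n\to\infty\\ \Omega_n\ni x_i\to x}}u_n(x_i),\qquad \underline u(x)=\liminf_{\substack{n\to\infty\\ \Omega_n\ni x_i\to x}}u_n(x_i).
\]
The uniform H\"older estimate guarantees that these are finite, inherit the H\"older modulus (so that $\overline u\in USC(\overline\Omega)$ and $\underline u\in LSC(\overline\Omega)$ are in fact continuous), and satisfy $\underline u\le\overline u$.

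The heart of the proof is to show that $\overline u$ is a viscosity subsolution and $\underline u$ a viscosity supersolution of \eqref{eq:2.3}; I verify the subsolution property, the other being symmetric. Let $\varphi\in C^2(\overline\Omega)$ touch $\overline u$ from above at a strict maximum $x_0\in\overline\Omega\setminus\mathcal O$ with $\nabla\varphi(x_0)\ne0$. By the standard relaxed-limit lemma there are $n_k\to\infty$ and $x_{i_k}\in\Omega_{n_k}$, $x_{i_k}\to x_0$, at which $u_{n_k}-\varphi$ attains a local maximum over the cloud with $u_{n_k}(x_{i_k})\to\overline u(x_0)$; since $\mathcal O$ is finite, $x_{i_k}\notin\mathcal O$ for large $k$, so $L^{H_{n_k}}_{p,\varepsilon_{n_k}}u_{n_k}(x_{i_k})=0$. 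The crucial structural observation is that the operator is monotone in the max/min blocks: writing $c_k=u_{n_k}(x_{i_k})-\varphi(x_{i_k})$ and using $u_{n_k}\le\varphi+c_k$ near $x_{i_k}$, both $M^{k\varepsilon_{n_k}}u_{n_k}(x_j)$ and $M_{k\varepsilon_{n_k}}u_{n_k}(x_j)$ are bounded by the corresponding quantities for $\varphi$ plus $c_k$, while these additive constants cancel exactly against $-2u_{n_k}(x_{i_k})$. Hence $0=L^{H_{n_k}}_{p,\varepsilon_{n_k}}u_{n_k}(x_{i_k})\le \widetilde L\varphi(x_{i_k})$, where $\widetilde L$ is the same operator evaluated on $\varphi$. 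Replacing discrete maxima and minima by continuum suprema and infima costs $O(\delta_{n_k}/\varepsilon_{n_k}^2)$, and replacing the empirical sum by its integral costs a concentration error; both are $o(1)$ precisely because $\delta_n\ll\varepsilon_n^2$, and the consistency result of Section 3 then sends the remaining nonlocal operator to $\Delta^\rho_p\varphi(x_0)$. This yields $-\Delta^\rho_p\varphi(x_0)\le0$ in the interior and, at a boundary touching point, the required $\min\{-\Delta^\rho_p\varphi(x_0),\partial_{\vec n}\varphi(x_0)\}\le0$.

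It remains to pin down the Dirichlet data and to conclude. Because the labeled points are fixed and $u_n\equiv y_i$ there, the uniform H\"older continuity forces $\overline u(x_i)=\underline u(x_i)=y_i=f(x_i)$ on $\mathcal O$, so both relaxed limits carry the correct Dirichlet values. The comparison principle underlying Theorem \ref{th:existence} then applies to the subsolution $\overline u$ and the supersolution $\underline u$, which agree on $\mathcal O$, and gives $\overline u\le\underline u$ on $\overline\Omega$; together with $\underline u\le\overline u$ this forces $\overline u=\underline u$. The common value is continuous, solves \eqref{eq:2.3}, and equals $f$ on $\mathcal O$, hence by uniqueness it is the solution $u$ of \eqref{eq:2.2}; coincidence of the relaxed limits is exactly uniform convergence $u_n\to u$.

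I expect the main obstacle to be the boundary analysis producing the Neumann condition. Near a point $x_0\in\partial\Omega$ the balls $\overline B(x_i,\varepsilon_n)$ and $\overline B(x_j,k\varepsilon_n)$ are truncated by $\partial\Omega$ while the cloud lives only inside $\Omega$, so the max/min blocks and the empirical sum become one-sided; one must flatten $\partial\Omega$ using its $C^1$ regularity and show that the leading asymmetry contributes precisely the normal-derivative term, so that the truncated consistency degenerates into either $-\Delta^\rho_p\varphi(x_0)\le0$ or $\partial_{\vec n}\varphi(x_0)\le0$. Controlling these truncation effects uniformly, simultaneously with the density and concentration errors at the borderline scale $\delta_n\sim\varepsilon_n^2$, is the delicate step; the interior consistency and the monotonicity argument above are comparatively routine.
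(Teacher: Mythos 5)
Your proposal is correct and rests on the same two pillars as the paper---the pointwise consistency results (Lemmas \ref{le:consistency_hypergraph_interior} and \ref{le:consistency_hypergraph_boundary}) and the uniform H\"older estimate (Proposition \ref{pr:holder})---and your key step at the touching point (the additive constant cancelling in the max/min blocks, giving $0=L^{H_n}_{p,\varepsilon_n}u_n(x_n)\leq L^{H_n}_{p,\varepsilon_n}\varphi(x_n)$) is verbatim the paper's inequality \eqref{eq:3.3.2}. You diverge in two framing choices. First, the randomness: the paper does not use concentration plus Borel--Cantelli, but the transportation maps $T_n$ with $\|Id-T_n\|_{L^\infty(\Omega)}=O(\delta_n)$ almost surely (estimate \eqref{measure}), which yields consistency \emph{uniformly in the base point}; this matters because your touching points $x_{i_k}$ are random and data-dependent, so a pointwise concentration estimate at fixed $x$ would not suffice without an extra union bound over the cloud and over a countable family of test functions---standard machinery, but a real omission in your sketch as written, and exactly what the transport map sidesteps. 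Second, the conclusion: you run Barles--Souganidis half-relaxed limits and close with a comparison principle between the (continuous, by your H\"older argument) sub- and supersolution, whereas the paper interpolates $\tilde u_n=u_n\circ T_n$, extracts a uniformly convergent subsequence by Arzel\`a--Ascoli (using the H\"older estimate with the $\varepsilon_n^\alpha$ slack), shows the limit is itself a viscosity solution, and invokes only the \emph{uniqueness} asserted in Theorem \ref{th:existence}. Your route therefore needs a comparison theorem for the mixed problem with Dirichlet data on the finite set $\mathcal{O}$, which the paper never states (it is implicit in the cited adaptation of the Dirichlet-problem proof), while the compactness route needs only what Theorem \ref{th:existence} literally provides; conversely, your route avoids the interpolation and subsequence bookkeeping. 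Since the H\"older estimate already gives equicontinuity, compactness is arguably the more economical choice here, and it is the one the paper makes. Finally, the ``delicate step'' you flag---the one-sided boundary analysis producing the Neumann alternative---is precisely Lemmas \ref{le:consistency_boundary} and \ref{le:consistency_hypergraph_boundary} of Section 3, so under your own premises it is an available ingredient rather than something your proof must redo.
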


The assumption $\varepsilon_n\gg \sqrt{\delta_n}$ in Theorem \ref{th:convergence} will become clear when we translate the discrete operator $L^{H_n}_{p,\varepsilon_n}$ into a nonlocal operator via the optimal transposition and estimate the error it introduced.
It also implies the connectivity of the hypergraph $H_n$.
In fact, by \cite{penrose2003random,garcia2016continuum}, the $\varepsilon_n$-ball graph is connected
with probability one.
Namely, for any $x_i,x_j\in\Omega_n$, there exist a sequence of vertices $x_{k_1}=x_i, x_{k_2},\cdots, x_{k_{s-1}},
x_{k_s}=x_j$, such that $|x_{k_l}-x_{k_{l-1}}|\leq \varepsilon_n$ for any $2\leq l\leq s$.
This indicates that hyperedge $e_k$ has cardinality $|e_k|\geq 2$ for any $k=1,2\cdots,n$ and hypergraph $H_{n}$ is connected.

The assumption $p>d$ in Theorem \ref{th:convergence} can be fulfilled with a flat kernel $\eta$ or a large $k$.
We conclude this section with two examples.

\begin{remark}\label{re:1}
We recall an integration formula from \cite[Lemma 2.1]{ishii2010class}
\begin{equation}\label{eq:remark1}
  \int_{B(0,1)}\eta(|x|^2)|x_1|^{2p_1-1}\cdots|x_d|^{2p_d-1}dx
  =\frac{\Gamma(p_1)\cdots\Gamma(p_d)}{\Gamma(p_1+\cdots+p_d)}
  \int_0^1\eta(t)t^{p_1+\cdots +p_d-1}dt,
\end{equation}
where the constant $p_i\geq \frac{1}{2}$ for $1\leq i\leq d$ and $\Gamma(t)=\int_{0}^{\infty}e^{-x}x^{t-1}dt$ is the Gamma function.
\begin{itemize}
  \item If $k(p)=1$ and $\eta\equiv 1$, $L^{H_n}_{p,\varepsilon_n}$ is exactly the hypergraph operator
      $L^{H_n}$ defined in \eqref{eq:1.3} with hyperedge \eqref{eq:1.1}.
In this case it follows from \eqref{eq:remark1} that $\frac{\sigma_{\eta,2}}{\sigma_{\eta,1}}=d+2$ and  $p=\frac{\sigma_{\eta,2}}{\sigma_{\eta,1}}+2=d+4$.
  \item If $k(p)=l$ for a constant $l\geq 0$ and $\eta(t)=e^{-\frac{t^2}{\sigma^2}}$, which is the commonly used Gaussian kernel, we obtain from \eqref{eq:remark1} that
  \begin{equation*}
    \frac{\sigma_{\eta,2}}{\sigma_{\eta,1}}=d\frac{\int_{0}^{1}e^{-\frac{t}{\sigma^2}}t^{\frac{d}{2}-1}dt}
    {\int_{0}^{1}e^{-\frac{t}{\sigma^2}}t^{\frac{d}{2}}dt}
    =\frac{\frac{2}{\sigma^2}\int_{0}^{1}e^{-\frac{t}{\sigma^2}}t^{\frac{d}{2}}dt+2e^{-\frac{1}{\sigma^2}}}
    {\int_{0}^{1}e^{-\frac{t}{\sigma^2}}t^{\frac{d}{2}}dt}>\frac{2}{\sigma^2}.
  \end{equation*}
  Consequently, $p=\frac{\sigma_{\eta,2}}{\sigma_{\eta,1}}l^2+2>\frac{2l^2}{\sigma^2}+2$. To satisfy the assumption $p>d$, it suffices that $l\geq \sqrt{\frac{(d-2)\sigma^2}{2}}$.
\end{itemize}
\end{remark}

\section{Consistency for the hypergraph $p$-Laplacian operator}\label{se:3.1}

For $\varphi\in C^2(\mathbb{R}^d)$, let
\begin{equation*}
  \Delta^\rho \varphi=\frac{1}{\rho}\mbox{div}(\rho^2\nabla \varphi)\quad \mbox{and} \quad
  \Delta_\infty^\rho \varphi=\rho\Delta_\infty \varphi
  =\rho\frac{1}{|\nabla \varphi|^2}\varphi_{x_ix_j}\varphi_{x_i}\varphi_{x_j}
\end{equation*}
be the weighted Laplacian and weighted $\infty$-Laplacian.
Here and in the following we utilize the Einstein summation convention.
Define
\begin{align*}
  L_p\varphi&=\sigma_{\eta,1}\Delta^\rho \varphi +k^2\sigma_{\eta,2}\Delta_\infty^\rho \varphi
  =\sigma_{\eta,1}\left(\Delta^\rho \varphi+\frac{k^2\sigma_{\eta,2}}{\sigma_{\eta,1}}\Delta_\infty^\rho \varphi\right)
  =\sigma_{\eta,1}\left(\Delta^\rho \varphi+(p-2)\Delta_\infty^\rho \varphi\right)\\
  &=\frac{\sigma_{\eta,1}}{\rho}|\nabla \varphi|^{2-p}\mbox{div}\left(\rho^2|\nabla \varphi|^{p-2}\nabla \varphi\right)
  =\frac{\sigma_{\eta,1}}{\rho}|\nabla \varphi|^{2-p}\Delta^\rho_p\varphi,
\end{align*}
which is a weighted and normalized $p$-Laplacian operator.
In this section, we show the consistency between the hypergraph $p$-Laplacian operator $L^{H_n}_{p,\varepsilon_n}$ and the continuum $p$-Laplacian operator $L_{p}$ for smooth functions.
A nonlocal $p$-Laplacian operator is needed as a bridge between the discrete and continuum settings.

Let $V$ and $D$ be two subsets of $\R^d$. We use the notation
\begin{equation*}
  V_D=V\backslash D \quad\mbox{and}\quad (V_D)^\varepsilon=\{x\in V: \mbox{dist}(x,D)>\varepsilon\},
\end{equation*}
for a small constant $\varepsilon>0$.
For $\varphi\in C(\overline{\Omega})$, define
\begin{equation*}
  M^{\varepsilon} \varphi(x)=\max\limits_{y\in \overline{B}(x,\varepsilon)\cap\overline{\Omega}}\varphi(y),\quad
  M_{\varepsilon} \varphi(x)=\min\limits_{y\in \overline{B}(x,\varepsilon)\cap\overline{\Omega}}\varphi(y),\quad
  x\in\overline{\Omega}.
\end{equation*}
The nonlocal $p$-Laplacian operator with rescaling parameter $\frac{1}{2\varepsilon^{2}}$ and weight $\rho$ reads
\begin{equation}\label{eq:3.1}
  L_{p,\varepsilon}\varphi(x)=\frac{1}{2\varepsilon^{2}}\int_{\Omega}\eta^\chi_\varepsilon(|x-y|)
  \left(M^{k\varepsilon} \varphi(y)+M_{k\varepsilon} \varphi(y)-2\varphi(x)\right)\rho(y)dy,
  \quad x\in (\overline{\Omega}_\mathcal{O})^{(k+1)\varepsilon},
\end{equation}
where $k=\sqrt{\frac{(p-2)\sigma_{\eta,1}}{\sigma_{\eta,2}}}$, as in the discrete case \eqref{eq:p}.

The consistency between $L_{p,\varepsilon}$ and $L_p$ takes different forms in the interior of $\Omega$ and near its boundary $\partial\Omega$.
The consistency in the interior of $\Omega$ is as follows.

\begin{lemma}\label{le:consistency_interior}
Let $\varphi\in C^3(\overline{\Omega})$, $\varepsilon>0$ be sufficiently small, and $x\in (\overline{\Omega}_{\mathcal{O}\cap\partial\Omega})^{(k+1)\varepsilon}$.
If $\nabla \varphi(x)\neq 0$,
then
\begin{equation*}
  |L_{p,\varepsilon}\varphi(x)-L_p\varphi(x)|\leq C\varepsilon,
\end{equation*}
where the constant $C$ depends only on $\|\varphi\|_{C^3(\overline{\Omega})}$.
\end{lemma}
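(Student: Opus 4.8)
The plan is to split the integrand of $L_{p,\varepsilon}\varphi(x)$ into a part governed by the local max/min operators, which should reproduce the weighted $\infty$-Laplacian $k^2\sigma_{\eta,2}\Delta_\infty^\rho\varphi$, and a weighted averaging part, which should reproduce the weighted Laplacian $\sigma_{\eta,1}\Delta^\rho\varphi$; the whole difficulty is concentrated in the first part. Throughout I would use that $\nabla\varphi(x)\neq 0$ together with $\varphi\in C^3(\overline\Omega)$ forces a uniform lower bound $|\nabla\varphi(y)|\geq c>0$ for all $y$ in the kernel support $\overline B(x,\varepsilon)$ once $\varepsilon$ is small, and that the distance restriction on $x$ keeps every ball $\overline B(y,k\varepsilon)$ inside $\overline\Omega$, so the max/min are attained on the full sphere $\partial B(y,k\varepsilon)$ rather than on a piece truncated by $\partial\Omega$.

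First I would expand $M^{k\varepsilon}\varphi(y)$ and $M_{k\varepsilon}\varphi(y)$. Writing the maximizer as $y+k\varepsilon\,\omega^*$ with $|\omega^*|=1$ and running a Lagrange-multiplier/perturbation argument on $\max_{|\omega|=1}\bigl[\,k\varepsilon\,\nabla\varphi(y)\cdot\omega+\tfrac{(k\varepsilon)^2}{2}\omega^\top D^2\varphi(y)\,\omega\,\bigr]$, the leading direction is $e=\nabla\varphi(y)/|\nabla\varphi(y)|$ and the first-order correction $\omega^*=e+O(\varepsilon)$ is orthogonal to $\nabla\varphi(y)$. Substituting back, the cross term between $\nabla\varphi(y)$ and this correction vanishes, giving
\[ M^{k\varepsilon}\varphi(y)=\varphi(y)+k\varepsilon\,|\nabla\varphi(y)|+\tfrac{(k\varepsilon)^2}{2}\Delta_\infty\varphi(y)+O(\varepsilon^3), \]
and likewise $M_{k\varepsilon}\varphi(y)=\varphi(y)-k\varepsilon\,|\nabla\varphi(y)|+\tfrac{(k\varepsilon)^2}{2}\Delta_\infty\varphi(y)+O(\varepsilon^3)$. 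Adding, the first-order terms cancel and
\[ M^{k\varepsilon}\varphi(y)+M_{k\varepsilon}\varphi(y)-2\varphi(y)=(k\varepsilon)^2\Delta_\infty\varphi(y)+O(\varepsilon^3), \]
with the remainder uniform in $y$, controlled by $\|\varphi\|_{C^3(\overline\Omega)}$ and the lower bound $c$.

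Next I would write $M^{k\varepsilon}\varphi(y)+M_{k\varepsilon}\varphi(y)-2\varphi(x)=\bigl[M^{k\varepsilon}\varphi(y)+M_{k\varepsilon}\varphi(y)-2\varphi(y)\bigr]+2\bigl(\varphi(y)-\varphi(x)\bigr)$ and insert the expansion, splitting $L_{p,\varepsilon}\varphi(x)$ into $I_1=\frac{k^2}{2}\int_\Omega\eta^\chi_\varepsilon(|x-y|)\Delta_\infty\varphi(y)\rho(y)\,dy$, the averaging term $I_2=\frac{1}{\varepsilon^2}\int_\Omega\eta^\chi_\varepsilon(|x-y|)(\varphi(y)-\varphi(x))\rho(y)\,dy$, and a remainder bounded by $\frac{1}{2\varepsilon^2}\cdot O(\varepsilon^3)\int_\Omega\eta^\chi_\varepsilon(|x-y|)\rho(y)\,dy=O(\varepsilon)$. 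For $I_1$, the kernel $\eta^\chi_\varepsilon(|x-\cdot|)$ is an approximate identity of mass $\int_{B(0,1)}\eta(|w|)\,dw=2\sigma_{\eta,2}$, and since $\Delta_\infty\varphi\,\rho$ is Lipschitz near $x$, $I_1=k^2\sigma_{\eta,2}\,\rho(x)\Delta_\infty\varphi(x)+O(\varepsilon)=k^2\sigma_{\eta,2}\,\Delta_\infty^\rho\varphi(x)+O(\varepsilon)$. For $I_2$, I would Taylor expand $\varphi$ to second order and $\rho$ to first order about $x$, substitute $y=x+\varepsilon w$, and use the evenness of $\eta(|w|)$ on $B(0,1)$ to kill the odd moments; the identity $\int_{B(0,1)}\eta(|w|)w_iw_j\,dw=2\sigma_{\eta,1}\delta_{ij}$ then gives $I_2=\sigma_{\eta,1}\rho(x)\Delta\varphi(x)+2\sigma_{\eta,1}\nabla\rho(x)\cdot\nabla\varphi(x)+O(\varepsilon)$, which equals $\sigma_{\eta,1}\Delta^\rho\varphi(x)+O(\varepsilon)$ because $\Delta^\rho\varphi=\rho\Delta\varphi+2\nabla\rho\cdot\nabla\varphi$. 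Summing $I_1$ and $I_2$ reproduces $L_p\varphi(x)=\sigma_{\eta,1}\Delta^\rho\varphi(x)+k^2\sigma_{\eta,2}\Delta_\infty^\rho\varphi(x)$ up to $O(\varepsilon)$, which is the assertion.

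I expect the main obstacle to be making Step 1 rigorous and uniform in $y$: one must verify that the constrained maximizer genuinely sits on $\partial B(y,k\varepsilon)$ and inside $\overline\Omega$ (here the hypotheses $\nabla\varphi\neq 0$ and the distance condition on $x$ are essential), control the first-order correction to the optimal direction and confirm its orthogonality to $\nabla\varphi(y)$ so that no stray $O(\varepsilon)$ term survives after summing max and min, and keep the $O(\varepsilon^3)$ remainder uniform via $\|\varphi\|_{C^3(\overline\Omega)}$ and the uniform lower bound $c$. By comparison, the averaging estimate for $I_2$ is a routine nonlocal-to-local expansion.
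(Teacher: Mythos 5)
Your proposal is correct and takes essentially the same route as the paper's proof: both locate the near-maximizer/minimizer of the second-order Taylor polynomial at $\pm\nabla\varphi/|\nabla\varphi|$ (an envelope argument making the $O(\varepsilon)$ correction to the optimal direction harmless), cancel the first-order terms $\pm k\varepsilon|\nabla\varphi|$ by adding max and min, and conclude via the odd-moment cancellation together with the moment identities defining $\sigma_{\eta,1}$ and $\sigma_{\eta,2}$. The only cosmetic difference is bookkeeping: you center the max/min expansion at each $y$ (obtaining $M^{k\varepsilon}\varphi(y)+M_{k\varepsilon}\varphi(y)-2\varphi(y)=(k\varepsilon)^2\Delta_\infty\varphi(y)+O(\varepsilon^3)$) and peel off the averaging term $2(\varphi(y)-\varphi(x))$, whereas the paper expands everything at $x$ inside a single function $h(\hat y)$ and splits $\rho(x+\varepsilon\hat y)$ into $\rho(x)$ plus a variation term; both organizations produce the same $O(\varepsilon)$ bound, and your explicit flagging of the uniform lower bound on $|\nabla\varphi|$ near $x$ is, if anything, more careful than the paper on the point where the remainder constant implicitly depends on it.
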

\begin{proof}
  By a change of variables $y=x+\varepsilon\hat{y}$,
  \begin{align*}
    L_{p,\varepsilon}\varphi(x)&=\frac{1}{2\varepsilon^2}\int_{B(0,1)} \eta(|\hat{y}|)h(\hat{y})\rho(x+\varepsilon\hat{y})d\hat{y}\\
    &=\frac{1}{2\varepsilon^2}\int_{B(0,1)} \eta(|\hat{y}|)h(\hat{y})\left(\rho(x+\varepsilon\hat{y})-\rho(x)\right)d\hat{y}
    +\frac{\rho(x)}{2\varepsilon^2}\int_{B(0,1)} \eta(|\hat{y}|)h(\hat{y})d\hat{y}\\
    &=:I_1+\rho(x)I_2,
  \end{align*}
where by a change of variables $z=x+\varepsilon\hat{y}+k\varepsilon\hat{z}$  and Taylor's expansion,
\begin{align}\label{eq:consistency_interior:3}
\begin{split}
  h(\hat{y})
  =&M^{k\varepsilon} \varphi(x+\varepsilon\hat{y})+M_{k\varepsilon} \varphi(x+\varepsilon\hat{y})-2\varphi(x) \\
  =&\sup_{z\in B(x+\varepsilon\hat{y},k\varepsilon)}\varphi(z)
  +\inf_{z\in B(x+\varepsilon\hat{y},k\varepsilon)}\varphi(z) -2\varphi(x) \\
  =&\sup\limits_{\hat{z}\in B(0,1)}\varphi\left(x+\varepsilon(\hat{y}+k\hat{z})\right)
  +\inf\limits_{\hat{z}\in B(0,1)}\varphi\left(x+\varepsilon(\hat{y}+k\hat{z})\right) -2 \varphi(x) \\
  =&\sup\limits_{\hat{z}\in B(0,1)}\left(\varepsilon\nabla \varphi(x)\cdot(\hat{y}+k\hat{z}) +\frac{\varepsilon^2}{2}\varphi_{x_ix_j}(x)(\hat{y}_i+k\hat{z}_i)(\hat{y}_j+k\hat{z}_j))\right)\\
  &+\inf\limits_{\hat{z}\in B(0,1)}\left(\varepsilon\nabla \varphi(x)\cdot(\hat{y}+k\hat{z}) +\frac{\varepsilon^2}{2}\varphi_{x_ix_j}(x)(\hat{y}_i+k\hat{z}_i)(\hat{y}_j+k\hat{z}_j)\right)+O(\varepsilon^3).
\end{split}
\end{align}
Notice that $\nabla \varphi(x)\neq 0$ and $\varepsilon$ is sufficiently small.
The supremum and infimum on the right-hand side of $h(\hat{y})$ are attained at $\hat{z}=\frac{\nabla \varphi(x)}{|\nabla \varphi(x)|}$ and $\hat{z}=-\frac{\nabla \varphi(x)}{|\nabla \varphi(x)|}$, respectively.
Consequently,
\begin{align*}
  h(\hat{y})
  &=2\varepsilon\nabla \varphi(x)\cdot\hat{y}
  +\varepsilon^2\left(\varphi_{x_ix_j}(x)\hat{y}_i\hat{y}_j
  +k^2\Delta_\infty \varphi(x)\right)  +O(\varepsilon^3),
\end{align*}
from which we deduce that
\begin{align*}
  I_1&=\frac{1}{2\varepsilon}\int_{B(0,1)} \eta(|\hat{y}|)h(\hat{y})
  (\nabla\rho(x)\cdot\hat{y}+O(\varepsilon)) d\hat{y}
  =\int_{B(0,1)}\eta(|\hat{y}|) \nabla \varphi(x)\cdot\hat{y}\nabla\rho(x)\cdot\hat{y} d\hat{y}+O(\varepsilon)\\
  &=2\sigma_{\eta,1} \nabla \varphi(x)\cdot\nabla \rho(x)+O(\varepsilon),
\end{align*}
and
\begin{align*}
  I_2
  =&\frac{1}{\varepsilon}\int_{B(0,1)}\eta(|\hat{y}|) \nabla \varphi(x)\cdot\hat{y}d\hat{y}
  +\frac{1}{2}\int_{B(0,1)} \eta(|\hat{y}|)\varphi_{x_ix_j}(x)\hat{y}_i\hat{y}_jd\hat{y}
  +\frac{k^2}{2}\int_{B(0,1)} \eta(|\hat{y}|)\Delta_\infty \varphi(x) d\hat{y}+O(\varepsilon)  \\
  =&\sigma_{\eta,1}\Delta\varphi(x)+ k^2\sigma_{\eta,2}\Delta_\infty \varphi(x)+O(\varepsilon),
\end{align*}
where $\sigma_{\eta,1}$ and $\sigma_{\eta,2}$ are defined in \eqref{eq:eta}.
Combining all results yields
\begin{align*}
  L_{p,\varepsilon}\varphi(x)
  &=I_1+\rho(x)I_2\\
  &=2\sigma_{\eta,1} \nabla \varphi(x)\cdot\nabla \rho(x)
  +\sigma_{\eta,1}\rho(x)\Delta\varphi(x)
  +k^2\sigma_{\eta,2}\rho(x)\Delta_\infty \varphi(x)+O(\varepsilon)\\
  &=\sigma_{\eta,1}\Delta^\rho \varphi(x)+k^2\sigma_{\eta,2}\Delta_\infty^\rho \varphi(x)+O(\varepsilon)\\
  &=L_p \varphi(x)+O(\varepsilon).
\end{align*}
This completes the proof.
\end{proof}


The consistency near the boundary $\partial\Omega$ is as follows.
\begin{lemma}\label{le:consistency_boundary}
Let $x\in\partial\Omega$ and $x_\varepsilon\in \overline{\Omega}$ such that $x_\varepsilon\rightarrow x$ as $\varepsilon\rightarrow 0$.
If $\varphi\in C^3(\overline{\Omega})$, $\nabla \varphi(x)\neq 0$, and $\varepsilon_0$ is sufficiently small,
then
\begin{equation}\label{eq:le:consistency_boundary:1}
-L_{p,\varepsilon}\varphi(x_\varepsilon)\leq 0 \mbox{ for any } \varepsilon\leq \varepsilon_0
~~\Longrightarrow~~
\min\left\{-L_p\varphi(x),\frac{\partial\varphi}{\partial\vec{n}}(x)\right\}\leq 0,
\end{equation}
and
\begin{equation}\label{eq:le:consistency_boundary:2}
-L_{p,\varepsilon}\varphi(x_\varepsilon)\geq 0  \mbox{ for any } \varepsilon\leq \varepsilon_0
~~\Longrightarrow~~
\max\left\{-L_p\varphi(x),\frac{\partial\varphi}{\partial\vec{n}}(x)\right\}\geq 0.
\end{equation}
\end{lemma}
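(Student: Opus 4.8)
The plan is to treat each implication by contradiction, after first removing the cases in which the conclusion is automatic. Although $\nabla\varphi(x)\neq 0$ is assumed, the normal derivative $\frac{\partial\varphi}{\partial\vec{n}}(x)=\nabla\varphi(x)\cdot\vec{n}(x)$ may vanish, so I would first note that if $\frac{\partial\varphi}{\partial\vec{n}}(x)=0$ then both right-hand sides of \eqref{eq:le:consistency_boundary:1} and \eqref{eq:le:consistency_boundary:2} hold trivially, and that the sign of $\frac{\partial\varphi}{\partial\vec{n}}(x)$ neutralizes one entry of each $\min$/$\max$. Thus the only nontrivial case of \eqref{eq:le:consistency_boundary:1} is $\frac{\partial\varphi}{\partial\vec{n}}(x)>0$, where the claim reduces to $-L_p\varphi(x)\le 0$, and the only nontrivial case of \eqref{eq:le:consistency_boundary:2} is $\frac{\partial\varphi}{\partial\vec{n}}(x)<0$, reducing to $-L_p\varphi(x)\ge 0$. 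Moreover $L_{p,\varepsilon}$, $L_p$, and $\frac{\partial}{\partial\vec{n}}$ are all odd: since $M^{k\varepsilon}(-\varphi)=-M_{k\varepsilon}\varphi$ and $M_{k\varepsilon}(-\varphi)=-M^{k\varepsilon}\varphi$, one checks $L_{p,\varepsilon}(-\varphi)=-L_{p,\varepsilon}\varphi$ and $L_p(-\varphi)=-L_p\varphi$, so \eqref{eq:le:consistency_boundary:2} for $\varphi$ is precisely \eqref{eq:le:consistency_boundary:1} for $-\varphi$. It therefore suffices to prove \eqref{eq:le:consistency_boundary:1}, where I may assume $\frac{\partial\varphi}{\partial\vec{n}}(x)>0$ and, for contradiction, $L_p\varphi(x)<0$.

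The core idea is to compare the truncated operator with an untruncated one. Extending $\varphi$ and $\rho$ smoothly to a neighbourhood of $x$ in $\R^d$, let $L^{\mathrm{full}}_{p,\varepsilon}\varphi(x_\varepsilon)$ denote the operator \eqref{eq:3.1} evaluated with the full balls $B(0,1)$ and $\overline{B}(\cdot,k\varepsilon)$, i.e. without intersecting with $\overline{\Omega}$. Since no truncation occurs, the interior computation of Lemma \ref{le:consistency_interior} applies verbatim at $x_\varepsilon\to x$ (using $\nabla\varphi(x_\varepsilon)\neq 0$ for small $\varepsilon$) and gives $L^{\mathrm{full}}_{p,\varepsilon}\varphi(x_\varepsilon)\to L_p\varphi(x)$. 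The whole task is then to bound the defect $T_\varepsilon:=L_{p,\varepsilon}\varphi(x_\varepsilon)-L^{\mathrm{full}}_{p,\varepsilon}\varphi(x_\varepsilon)$ and show $T_\varepsilon\le o(1)$: this yields $L_{p,\varepsilon}\varphi(x_\varepsilon)\le L_p\varphi(x)+o(1)<0$, hence $-L_{p,\varepsilon}\varphi(x_\varepsilon)>0$ for arbitrarily small $\varepsilon$, contradicting the hypothesis of \eqref{eq:le:consistency_boundary:1}.

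I would split $T_\varepsilon$ into an inner and an outer contribution. For the inner extrema, truncating a domain can only decrease a supremum and increase an infimum, so $M^{k\varepsilon}\varphi\le M^{k\varepsilon}_{\mathrm{full}}\varphi$ for every center. The key point is that, because $\frac{\partial\varphi}{\partial\vec{n}}(x)>0$, the minimizing direction $-\nabla\varphi/|\nabla\varphi|$ points strictly inward near $x$, so by convexity of $\Omega$ the full-ball minimizer lies in $\overline{\Omega}$ for small $\varepsilon$ and the infimum is unchanged, $M_{k\varepsilon}\varphi=M_{k\varepsilon}^{\mathrm{full}}\varphi$. Hence the integrand $h$ of \eqref{eq:3.1} satisfies $h\le h_{\mathrm{full}}$ on the retained set $D_\varepsilon=\{\hat{y}\in B(0,1):x_\varepsilon+\varepsilon\hat{y}\in\Omega\}$, so the inner piece $\frac{1}{2\varepsilon^2}\int_{D_\varepsilon}\eta(|\hat{y}|)(h-h_{\mathrm{full}})\rho\,d\hat{y}$ is nonpositive. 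For the outer piece $-\frac{1}{2\varepsilon^2}\int_{B(0,1)\setminus D_\varepsilon}\eta(|\hat{y}|)h_{\mathrm{full}}\rho\,d\hat{y}$, flattening $\partial\Omega$ via its $C^1$ regularity identifies the removed cap with $\{\hat{y}\cdot\vec{n}(x)>\mathrm{dist}(x_\varepsilon,\partial\Omega)/\varepsilon\}$, on which $h_{\mathrm{full}}\approx 2\varepsilon\,\nabla\varphi(x)\cdot\hat{y}$ has strictly positive normal average (the tangential average vanishing by radial symmetry of $\eta$); so this piece is $\le o(1)$, the quadratic remainder contributing only $O(|\text{cap}|)=o(1)$. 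Combining, $T_\varepsilon\le o(1)$, which closes the contradiction and, via oddness, also proves \eqref{eq:le:consistency_boundary:2}.

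The main obstacle is exactly this defect estimate in the transition regime $\mathrm{dist}(x_\varepsilon,\partial\Omega)\approx(k+1)\varepsilon$. A naive proof would separate an ``interior'' regime, in which $L_{p,\varepsilon}\varphi(x_\varepsilon)\to L_p\varphi(x)$ by Lemma \ref{le:consistency_interior}, from a ``boundary'' regime, in which a diverging $O(1/\varepsilon)$ first-order term forces $L_{p,\varepsilon}\varphi(x_\varepsilon)\to-\infty$; but $x_\varepsilon$ may sit precisely at the borderline where this blow-up coefficient degenerates, and neither estimate is available. The comparison with $L^{\mathrm{full}}_{p,\varepsilon}$ is designed to bypass this: showing that \emph{both} truncation pieces are one-signed (nonpositive) for \emph{every} $x_\varepsilon$ gives $T_\varepsilon\le o(1)$ with no case distinction on $\mathrm{dist}(x_\varepsilon,\partial\Omega)/\varepsilon$. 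The delicate verifications are that the inner minimizer remains in $\overline{\Omega}$ up to the genuine (not merely leading-order) quadratic corrections, where the convexity of $\Omega$ and the strict sign $\frac{\partial\varphi}{\partial\vec{n}}(x)>0$ are exactly what is needed, and that flattening $\partial\Omega$ perturbs the cap integral by only $o(1)$.
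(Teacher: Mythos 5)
Your proposal is correct and takes essentially the same route as the paper's proof: reduce to the nontrivial case $\frac{\partial\varphi}{\partial\vec{n}}(x)>0$, compare the truncated sup/inf with the full-ball Taylor-expansion quantities (the infimum being unchanged since $-\nabla\varphi/|\nabla\varphi|$ points inward, the supremum only decreasing), show the removed boundary cap contributes a term of favorable sign by flattening $\partial\Omega$ via its $C^1$ regularity and killing the tangential part by symmetry, and deduce \eqref{eq:le:consistency_boundary:2} from \eqref{eq:le:consistency_boundary:1} by oddness under $\varphi\mapsto-\varphi$. One small slip worth noting: $O(|\mathrm{cap}|)$ is not $o(1)$ when $x_\varepsilon$ is within $o(\varepsilon)$ of $\partial\Omega$ (the cap then has measure of order one), but there the first-order cap term is of size $1/\varepsilon$ with the good sign and dominates the quadratic remainder, which is exactly how the paper's estimate of $I_2$ closes the same step.
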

\begin{proof}
Let us first prove \eqref{eq:le:consistency_boundary:1}.
  If $\nabla\varphi(x)$ towards inward from $\Omega$ or $\nabla\varphi(x)\perp \vec{n}(x)$, then $\frac{\partial\varphi}{\partial\vec{n}}(x)\leq0$ and the conclusion follows.
  Assume in the following that $\nabla\varphi(x)$ points outward from $\Omega$.
For sufficiently small $\varepsilon$, we also have that $\nabla\varphi(x_\varepsilon)$ points outward from $\Omega$ and $\nabla \varphi(x_\varepsilon)\neq 0$.

  By a change of variables $y=x_\varepsilon+\varepsilon\hat{y}$, $z=x_\varepsilon+\varepsilon\hat{y}+k\varepsilon\hat{z}$,  and Taylor's expansion,
  \begin{equation*}
    L_{p,\varepsilon}\varphi(x_\varepsilon)
    =\frac{1}{2\varepsilon^2}\int_{B(0,1)\atop x_\varepsilon+\varepsilon\hat{y}\in\Omega}\eta(|\hat{y}|) h(\hat{y})\rho(x_\varepsilon+\varepsilon\hat{y})d\hat{y},
  \end{equation*}
where
\begin{align*}
  h(\hat{y})&=\sup\limits_{\hat{z}\in B(0,1)\atop x_\varepsilon+\varepsilon(\hat{y}+k\hat{z})\in\Omega}\left(\varepsilon\nabla \varphi(x_\varepsilon)\cdot(\hat{y}+k\hat{z}) +\frac{\varepsilon^2}{2}\varphi_{x_ix_j}(x_\varepsilon)(\hat{y}_i+k\hat{z}_i)(\hat{y}_j+k\hat{z}_j)\right)\\
  &\qquad+\inf\limits_{\hat{z}\in B(0,1)\atop x_\varepsilon+\varepsilon(\hat{y}+k\hat{z})\in\Omega}\left(\varepsilon\nabla \varphi(x_\varepsilon)\cdot(\hat{y}+k\hat{z}) +\frac{\varepsilon^2}{2}\varphi_{x_ix_j}(x_\varepsilon)(\hat{y}_i+k\hat{z}_i)(\hat{y}_j+k\hat{z}_j)\right)
  +O(\varepsilon^3).
\end{align*}
Since
$x_\varepsilon+\varepsilon\hat{y}\in\Omega$ and $\nabla\varphi(x_\varepsilon)$ points outward from $\Omega$, we see that $x_\varepsilon+\varepsilon\left(\hat{y}-k\frac{\nabla\varphi(x_\varepsilon)}{|\nabla\varphi(x_\varepsilon)|}\right)\in\Omega$.
Consequently, the infimum on the right-hand side of $h(\hat{y})$ is attained at $\hat{z}=-\frac{\nabla \varphi(x_\varepsilon)}{|\nabla \varphi(x_\varepsilon)|}$.
This implies that
\begin{align*}
  h(\hat{y})
&\leq\sup\limits_{\hat{z}\in B(0,1)}\left(\varepsilon\nabla \varphi(x_\varepsilon)\cdot(\hat{y}+k\hat{z}) +\frac{\varepsilon^2}{2}\varphi_{x_ix_j}(x_\varepsilon)(\hat{y}_i+k\hat{z}_i)(\hat{y}_j+k\hat{z}_j)\right)\\
  &\qquad+\inf\limits_{\hat{z}\in B(0,1)}\left(\varepsilon\nabla \varphi(x_\varepsilon)\cdot(\hat{y}+k\hat{z}) +\frac{\varepsilon^2}{2}\varphi_{x_ix_j}(x_\varepsilon)(\hat{y}_i+k\hat{z}_i)(\hat{y}_j+k\hat{z}_j)\right)
  +O(\varepsilon^3)\\
  &=2\varepsilon\nabla \varphi(x_\varepsilon)\cdot\hat{y}
  +\varepsilon^2\left(\varphi_{x_ix_j}(x_\varepsilon)\hat{y}_i\hat{y}_j
  +k^2\Delta_\infty \varphi(x_\varepsilon)\right)  +O(\varepsilon^3)=:l(\hat{y}).
\end{align*}
It implies
  \begin{align*}
    L_{p,\varepsilon}\varphi(x_\varepsilon)
    \leq\frac{1}{2\varepsilon^2}\int_{B(0,1)\atop x_\varepsilon+\varepsilon\hat{y}\in\Omega}\eta(|\hat{y}|) l(\hat{y})\rho(x_\varepsilon+\varepsilon\hat{y})d\hat{y}
    &=\frac{1}{2\varepsilon^2}
    \left(\int_{B(0,1)}-\int_{B(0,1)\atop x_\varepsilon+\varepsilon\hat{y}\notin\Omega}\right)
    \eta(|\hat{y}|) l(\hat{y})\rho(x_\varepsilon+\varepsilon\hat{y})d\hat{y}\\
    &=:I_1-I_2.
  \end{align*}
By repeating the proof of Lemma \ref{le:consistency_interior}, we obtain that
\begin{equation*}
  I_1= L_p \varphi(x_\varepsilon)+O(\varepsilon).
\end{equation*}

Now we claim that
\begin{equation}\label{eq:le:consistency_boundary:3}
  I_2
    =\frac{1}{\varepsilon}
    \int_{B(0,1)\atop x_\varepsilon+\varepsilon\hat{y}\notin\Omega}
    \eta(|\hat{y}|) \left(\nabla \varphi(x_\varepsilon)\cdot\hat{y}+O(\varepsilon)\right)\rho(x_\varepsilon+\varepsilon\hat{y})d\hat{y}
    \geq 0,
\end{equation}
for any $x_\varepsilon\in\overline{\Omega}$ when $\varepsilon$ is sufficiently small.
If $\mbox{dist}(x_\varepsilon,\partial\Omega)\geq \varepsilon$, \eqref{eq:le:consistency_boundary:3} is trivial.
On the other hand, if $\mbox{dist}(x_\varepsilon,\partial\Omega)< \varepsilon$, we shall prove that
\begin{equation*}
  I_3:=\int_{B(0,1)\atop x_\varepsilon+\varepsilon\hat{y}\notin\Omega}\nabla \varphi(x_\varepsilon)\cdot\hat{y}d\hat{y}>0,
\end{equation*}
which implies \eqref{eq:le:consistency_boundary:3}. This follows from the regularity of the boundary $\partial\Omega$.

Let $\bar{x}_\varepsilon$ be the orthogonal projection of $x_\varepsilon$ on the boundary $\partial\Omega$.
Assume without loss of generality that $\vec{n}(\bar{x}_\varepsilon)=e_d$.
Since $\partial\Omega$ is $C^1$, there exists a $C^1$ function $\psi:\mathbb{R}^{d-1}\rightarrow\mathbb{R}$ such that the domain $\Omega$ lies below the graph of $\psi$, i.e.,
\begin{equation*}
  \Omega\cap B(\bar{x}_\varepsilon,\delta)=\{y=(y',y_d)\in B(\bar{x}_\varepsilon,\delta): y_d<\psi(y')\},
\end{equation*}
for a small constant $\delta>0$.
Moreover,
 $\nabla \psi(\bar{x}_\varepsilon')=\nabla \psi(x_\varepsilon')=0$.
Now by Taylor's expansion,
\begin{equation*}
x_\varepsilon+\varepsilon\hat{y}\notin\Omega
\Longleftrightarrow
x_{\varepsilon,d}+\varepsilon\hat{y}_d\geq \psi (x_\varepsilon'+\varepsilon\hat{y}')
\Longleftrightarrow
\hat{y}_d\geq \frac{\psi(x_\varepsilon')-x_{\varepsilon,d}}{\varepsilon}+o(1).
\end{equation*}
Consequently,
by the symmetry of the integral on the variable $\hat{y}'$,
\begin{align*}
  I_3&=\int_{B(0,1)\atop x_\varepsilon+\varepsilon\hat{y}\notin\Omega}\nabla \varphi(x_\varepsilon)\cdot\hat{y}d\hat{y}
  =\int_{B(0,1)\atop \hat{y}_d\geq \frac{\psi(x_\varepsilon')-x_{\varepsilon,d}}{\varepsilon}+o(1)}
  \varphi_{x_d}(x_\varepsilon) \hat{y}_dd\hat{y}
  =C\varphi_{x_d}(x_\varepsilon)\int_{\frac{\psi(x_\varepsilon')-x_{\varepsilon,d}}{\varepsilon}+o(1)}^{1}\hat{y}_dd\hat{y}_d\\
  &=C\varphi_{x_d}(x_\varepsilon)\left(1-\left(\frac{\psi(x_\varepsilon')-x_{\varepsilon,d}}{\varepsilon}+o(1)\right)^2\right),
\end{align*}
for a constant $C>0$.
Since $\nabla\varphi(x_\varepsilon)$ points outward from $\Omega$, we have $\varphi_{x_d}(x_\varepsilon)>0$.
Besides, by the assumption $\mbox{dist}(x_\varepsilon,\partial\Omega)< \varepsilon$, we further have $\frac{\psi(x_\varepsilon')-x_{\varepsilon,d}}{\varepsilon}=\frac{\mbox{dist}(x_\varepsilon,\partial\Omega)}{\varepsilon}<1$.
Substituting them into $I_3$ yields that $I_3>0$.
This proves \eqref{eq:le:consistency_boundary:3}.

To complete the proof of  \eqref{eq:le:consistency_boundary:1}, we collect all the above results and pass to the limit $\varepsilon\rightarrow 0$. Namely,
\begin{equation}\label{eq:Lemma7}
  L_p \varphi(x)
  =\lim_{\varepsilon\rightarrow 0}L_p \varphi(x_\varepsilon)
  =\lim_{\varepsilon\rightarrow 0}I_1
  \geq\lim_{\varepsilon\rightarrow 0}L_{p,\varepsilon} \varphi(x_\varepsilon)+I_2
  \geq \lim_{\varepsilon\rightarrow 0}L_{p,\varepsilon} \varphi(x_\varepsilon)\geq 0.
\end{equation}

To prove \eqref{eq:le:consistency_boundary:2}, we only need to consider $-\varphi$
and utilize the fact that $L_{p,\varepsilon}(-\varphi)(x_\varepsilon)=-L_{p,\varepsilon}\varphi(x_\varepsilon)$.
\end{proof}

The consistency results of Lemma \ref{le:consistency_interior} and Lemma \ref{le:consistency_boundary} can be generalized to the hypergraph $p$-Laplacian operator $L^{H_n}_{p,\varepsilon_n}$ via the transportation map
\begin{equation*}
  T_n: \Omega\rightarrow\Omega_n,\quad T_{n \sharp}\mu=\mu_n,
\end{equation*}
where $\mu_n=\frac{1}{n}\sum_{i=1}^{n}\delta_{x_i}$ is the empirical measure on $\Omega_n$
and $T_{n \sharp}\mu$ is the push-forward measure of $\mu$ by $T_n$ defined as
\begin{equation*}
  T_{n \sharp}\mu(A)=\mu\left(T^{-1}_n(A)\right),\quad A\in\mathcal{B}(\Omega),
\end{equation*}
for the Borel $\sigma$-algebra $\mathcal{B}(\Omega)$.
It leads to a change of variables,
\begin{equation}\label{eq:change_variable}
  \frac{1}{n}\sum_{i=1}^{n}\varphi(x_i)=\int_{\Omega}\varphi(x)d\mu_n(x)
  =\int_{\Omega}\varphi(T_n(x))d\mu(x)=\int_{\Omega}\varphi(T_n(x))\rho(x)dx,
\end{equation}
for a function $\varphi:\Omega_n\rightarrow\mathbb{R}$.
The error incurred in translating the hypergraph operator to a nonlocal operator is bounded by the following lemma
\cite{trillos2015rate}.
\begin{lemma}
  Let $d\geq 2$ and $\{X_i\}_{i=1}^\infty$ be a sequence of independent random variables with distribution $\mu$ on $\Omega$. Then there exists a sequence of transportation maps $\{T_n\}_{n=1}^\infty$ from $\mu$ to $\mu_n$, such that
  \begin{align}\label{measure}
  \begin{split}
    c\leq\liminf_{n\rightarrow\infty}\frac{\|Id-T_n\|_{L^\infty(\Omega)}}{\delta_n}
    \leq\limsup_{n\rightarrow\infty}\frac{\|Id-T_n\|_{L^\infty(\Omega)}}{\delta_n}
    \leq C,
  \end{split}
  \end{align}
  almost surely, where $\delta_n$ is defined in \eqref{delta}.
\end{lemma}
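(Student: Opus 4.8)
The statement is the García Trillos–Slepčev rate of convergence for empirical measures in the $\infty$-transportation distance, so I outline the approach I would take to establish it. The plan is to pass from transportation maps to the $\infty$-optimal transport distance
\[
  d_\infty(\mu,\mu_n)=\inf\Big\{\,\|Id-T\|_{L^\infty(\mu)}\ :\ T_\sharp\mu=\mu_n\,\Big\}.
\]
Since $\mu$ is absolutely continuous with density $\rho\in C^2(\overline{\Omega})$ bounded above and below, this infimum is attained by a transportation map, and because $\rho>0$ the norms $\|\cdot\|_{L^\infty(\mu)}$ and $\|\cdot\|_{L^\infty(\Omega)}$ agree; hence it suffices to prove the two-sided bound $c\,\delta_n\le d_\infty(\mu,\mu_n)\le C\,\delta_n$ almost surely and then take $T_n$ to be an (almost) optimal map. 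The lower and upper bounds on $\rho$ further let me reduce to the uniform measure up to multiplicative constants, by comparing displacements.

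For the upper bound I would use a multiscale (dyadic) decomposition of $\Omega$ into cubes and match mass scale by scale. On a cube $Q$ of side length $h$, the empirical count $n\mu_n(Q)$ concentrates around its mean $n\mu(Q)$, and a Bernstein inequality gives
\[
  \Prob\big(|n\mu_n(Q)-n\mu(Q)|>t\big)\le 2\exp\!\Big(\!-\frac{c\,t^{2}}{n\mu(Q)+t}\Big).
\]
Choosing the finest scale $h\sim\delta_n$ makes the number of cubes, times the failure probability at that scale, summable in $n$, so by Borel–Cantelli every cube at that scale carries the correct amount of mass for all large $n$. One then builds the map by transporting mass down the dyadic tree, at each level moving points a distance comparable to the current cube size; the total displacement telescopes and is controlled by the critical scale $\delta_n$. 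In dimension $d\ge 3$ this counting argument directly yields the $(\log n)^{1/d}n^{-1/d}$ rate in \eqref{delta}.

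The main obstacle is the two-dimensional case, where naive cube-matching loses a logarithmic factor and the sharp exponent $(\log n)^{3/4}$ requires the delicate analysis behind the Ajtai–Komlós–Tusnády matching theorem: rather than relying on a single critical scale, one must balance the contributions of all dyadic scales simultaneously through a chaining/martingale argument over the tree of cubes. For the lower bound I would exhibit, with probability bounded away from zero, a ball of radius $\sim\delta_n$ that is either empty of sample points or carries excess empirical mass, which forces at least one point to be displaced by $\gtrsim\delta_n$ under any admissible map; a Borel–Cantelli argument across a family of disjoint such balls then upgrades this to an almost sure lower bound. Combining the upper and lower bounds yields \eqref{measure}.
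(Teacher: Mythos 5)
You should first note that the paper does not prove this lemma at all: it is quoted verbatim from \cite{trillos2015rate} (Garc\'ia Trillos--Slep\v{c}ev), so the comparison here is against that reference. Your upper-bound outline is faithful to its strategy: reduce to the $\infty$-transport distance $d_\infty(\mu,\mu_n)$, handle the uniform measure on cubes by multiscale matching (with the sharp $d=2$ case resting on the Leighton--Shor/AKT-type analysis and $d\geq 3$ on the Shor--Yukich counting regime), then pass to general $\rho$ bounded above and below. Two points you gloss over are genuine lemmas in the reference rather than one-liners: the dyadic scheme must achieve \emph{exact} mass balance (Bernstein plus Borel--Cantelli only gives approximate counts per cube, and the correction transport down the tree is where the scale-by-scale bookkeeping $\sum_k 2^{-k}\sqrt{\log n/(n2^{-kd})}$ actually lives, which is also precisely why $d=2$ is critical), and the reduction from $\rho$ to the uniform measure is not a matter of ``comparing displacements'' but a separate quantitative estimate on $d_\infty$ between measures with comparable densities.

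The concrete gap is your lower bound in dimension $d=2$. The empty-ball (or excess-mass) argument can only produce, almost surely, an empty ball of radius $r$ with $nr^2\lesssim \log n$, since the probability that a fixed ball of radius $r$ contains no sample is about $e^{-cnr^2}$ and one needs this to beat the number of disjoint balls. This forces $r\lesssim (\log n/n)^{1/2}$, i.e.\ a lower bound of order $(\log n)^{1/2}n^{-1/2}=(\log n)^{-1/4}\delta_n\ll\delta_n$, which falls short of the claimed $c\leq\liminf_n \|Id-T_n\|_{L^\infty(\Omega)}/\delta_n$ by a factor $(\log n)^{1/4}$ when $\delta_n=(\log n)^{3/4}n^{-1/2}$. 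The matching lower bound at the $(\log n)^{3/4}$ rate is the hard half of Leighton--Shor and requires a different mechanism (a discrepancy argument over families of regions with controlled boundary, showing that with high probability some region's empirical excess must cross its boundary over a long perimeter), not a single defective ball. For $d\geq 3$ your empty-ball argument does give the right rate $(\log n/n)^{1/d}$, so the gap is specific to $d=2$; note also that the paper itself only ever uses the upper bound (via the condition $\varepsilon_n\gg\sqrt{\delta_n}$ and \eqref{eq:Tn}), but the statement as written includes the lower bound, so a proof of the lemma as stated must supply it.
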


The Lemma is stated in the random setting. As in \cite{garcia2016continuum}, we regard the data points as the realization of the random variables $\{X_i\}_{i=1}^\infty$ and consider our problem in the deterministic setting.

In the rest of this paper, we always assume that $\varepsilon_n\gg \sqrt{\delta_n}$.
It follows from \eqref{measure} that
\begin{equation}\label{eq:Tn}
  \lim_{n\rightarrow\infty}\frac{\|Id-T_n\|_{L^\infty(\Omega)}}{\varepsilon_n^2}=0.
\end{equation}
This is what we need in the following.

\begin{lemma}\label{le:consistency_hypergraph_interior}
  Let $\varphi\in C^3(\overline{\Omega})$, $\varepsilon_n>0$ be sufficiently small, and $x_i\in \Omega_n$ with $\mbox{dist}(x_i,\mathcal{O}\cap\partial\Omega)>(k+1)\varepsilon_n$.
  If $\nabla \varphi(x_i)\neq 0$, then
\begin{equation*}
  |L^{H_n}_{p,\varepsilon_n}\varphi(x_i)-L_p\varphi(x_i)|\leq C\varepsilon_n
  +\frac{C\|Id-T_n\|_{L^\infty(\Omega)}}{\varepsilon^{2}_n},
\end{equation*}
where the constant $C$ depends only on $\|\varphi\|_{C^3(\overline{\Omega})}$.

\end{lemma}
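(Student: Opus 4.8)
The plan is to introduce the nonlocal operator $L_{p,\varepsilon_n}$ as an intermediate object and to control the two resulting errors separately. By the triangle inequality,
\begin{equation*}
|L^{H_n}_{p,\varepsilon_n}\varphi(x_i)-L_p\varphi(x_i)|
\leq |L^{H_n}_{p,\varepsilon_n}\varphi(x_i)-L_{p,\varepsilon_n}\varphi(x_i)|
+ |L_{p,\varepsilon_n}\varphi(x_i)-L_p\varphi(x_i)|.
\end{equation*}
The hypothesis $\mbox{dist}(x_i,\mathcal{O}\cap\partial\Omega)>(k+1)\varepsilon_n$ is exactly the domain condition of Lemma \ref{le:consistency_interior}, so the second term is at most $C\varepsilon_n$ with $C$ depending only on $\|\varphi\|_{C^3(\overline{\Omega})}$. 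It therefore remains to show that the first term, which measures the passage from the discrete sum to the nonlocal integral, is bounded by $C\|Id-T_n\|_{L^\infty(\Omega)}/\varepsilon_n^2$.

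First I would use the change of variables \eqref{eq:change_variable} to rewrite the discrete operator as an integral in which the transportation map appears in the arguments,
\begin{equation*}
L^{H_n}_{p,\varepsilon_n}\varphi(x_i)
=\frac{1}{2\varepsilon_n^2}\int_\Omega \eta^\chi_{\varepsilon_n}(|x_i-T_n(x)|)\,\widetilde H_n(x)\,\rho(x)\,dx,
\end{equation*}
where $\widetilde H_n(x)=M^{k\varepsilon_n}\varphi(T_n(x))+M_{k\varepsilon_n}\varphi(T_n(x))-2\varphi(x_i)$ with the maximum and minimum taken over the data points in $\overline{B}(T_n(x),k\varepsilon_n)\cap\Omega_n$. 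Writing $\theta_n=\|Id-T_n\|_{L^\infty(\Omega)}$ and denoting by $H(x)=M^{k\varepsilon_n}\varphi(x)+M_{k\varepsilon_n}\varphi(x)-2\varphi(x_i)$ the corresponding continuum bracket in $L_{p,\varepsilon_n}$, the difference of the two operators splits as
\begin{equation*}
\frac{1}{2\varepsilon_n^2}\int_\Omega \eta^\chi_{\varepsilon_n}(|x_i-T_n(x)|)\,[\widetilde H_n(x)-H(x)]\,\rho\,dx
+\frac{1}{2\varepsilon_n^2}\int_\Omega \big[\eta^\chi_{\varepsilon_n}(|x_i-T_n(x)|)-\eta^\chi_{\varepsilon_n}(|x_i-x|)\big]\,H(x)\,\rho\,dx.
\end{equation*}

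For the first integral I would bound the discrepancy between the discrete and continuum extrema. Comparing the continuum maximum over $\overline{B}(x,k\varepsilon_n)\cap\overline{\Omega}$ with the discrete maximum over $\overline{B}(T_n(x),k\varepsilon_n)\cap\Omega_n$: the center shifts by at most $\theta_n$, every point of $\overline{\Omega}$ carries a data point within $\theta_n$ (since $T_n$ displaces points by at most $\theta_n$), and comparing balls of radii $k\varepsilon_n$ and $k\varepsilon_n\pm 2\theta_n$ together with the Lipschitz continuity of $\varphi$ yields $|\widetilde H_n(x)-H(x)|\leq C\theta_n$; as the normalized kernel has mass $O(1)$ (for either argument, by \eqref{eq:change_variable}), the first integral is $O(\theta_n/\varepsilon_n^2)$. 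For the second integral the crucial point is that the continuum bracket is first order small, $H(x)=O(\varepsilon_n)$, exactly as in the expansion \eqref{eq:consistency_interior:3}. I would split the kernel difference into two pieces: where the two characteristic functions agree, the Lipschitz bound on $\eta$ gives $|\eta_{\varepsilon_n}(|x_i-T_n(x)|)-\eta_{\varepsilon_n}(|x_i-x|)|\leq C\theta_n/\varepsilon_n^{d+1}$, and integrating against $H=O(\varepsilon_n)$ over the support of volume $O(\varepsilon_n^d)$ contributes $O(\theta_n/\varepsilon_n^2)$; on the symmetric difference of the two supports, which lies in an annulus of width $O(\theta_n)$ around the sphere $|x_i-x|=\varepsilon_n$ and hence has volume $O(\varepsilon_n^{d-1}\theta_n)$, the kernel is $O(\varepsilon_n^{-d})$ and again $H=O(\varepsilon_n)$, contributing $O(\theta_n/\varepsilon_n^2)$.

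The hard part will be the extremum estimate $|\widetilde H_n(x)-H(x)|\leq C\theta_n$, which is where the geometry of the point cloud genuinely enters: one must show that replacing a supremum over a continuum ball by a maximum over the nearby data points inside a ball with shifted center changes the value by only $O(\theta_n)$, uniformly in $x$ over the support, including those $x$ for which $\overline{B}(x,k\varepsilon_n)$ meets $\partial\Omega$ and data points are scarce. This relies on the density guarantee furnished by the transportation map together with a careful radius adjustment. A secondary delicate point, already used above, is that the second integral must exploit the first-order smallness $H=O(\varepsilon_n)$: without it the characteristic-function jump would only yield $O(\theta_n/\varepsilon_n^3)$. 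Finally, the resulting bound is meaningful precisely because $\theta_n/\varepsilon_n^2\to 0$ under the standing assumption $\varepsilon_n\gg\sqrt{\delta_n}$, cf.\ \eqref{eq:Tn}.
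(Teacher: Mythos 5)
Your proposal is correct and takes essentially the same route as the paper: the paper likewise writes $L^{H_n}_{p,\varepsilon_n}\varphi(x_i)=L_{p,\varepsilon_n}\varphi(x_i)+I_1+I_2$ via the change of variables \eqref{eq:change_variable}, bounds the bracket-difference term $I_1$ by the Lipschitz continuity of $M^{k\varepsilon_n}\varphi$ in its base point together with the $O(1)$ kernel mass, splits the kernel-difference term $I_2$ over $\{|x_i-y|>\tilde{\varepsilon}_n\}$, $\{|x_i-y|<\hat{\varepsilon}_n\}$ and the intermediate annulus of volume $O(\varepsilon_n^{d-1}\|Id-T_n\|_{L^\infty(\Omega)})$, exploiting exactly the first-order smallness of the bracket that you flag as essential, and then concludes by Lemma \ref{le:consistency_interior}. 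If anything, you are more careful than the paper on the one delicate point you single out, the discrepancy between the discrete maximum over $\overline{B}(T_n(y),k\varepsilon_n)\cap\Omega_n$ and the continuum maximum over $\overline{B}(y,k\varepsilon_n)\cap\overline{\Omega}$, which the paper's notation $M^{k\varepsilon_n}\varphi(T_n(y))$ silently absorbs but which is indeed $O(\|Id-T_n\|_{L^\infty(\Omega)})$ by the density and radius-adjustment argument you sketch.
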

\begin{proof}
  By a change of variables \eqref{eq:change_variable},
\begin{align}\label{eq:3.2}
\begin{split}
  L^{H_n}_{p,\varepsilon_n}\varphi(x_i)
  &=\frac{1}{2\varepsilon_n^{2}}\int_{\Omega}\eta^\chi_{\varepsilon_n}(|x_i-T_n(y)|)
  \left(M^{k\varepsilon_n} \varphi(T_n(y))+M_{k\varepsilon_n} \varphi(T_n(y))-2\varphi(x_i)\right)\rho(y)dy\\
    &=L_{p,\varepsilon_n}\varphi(x_i)+I,
\end{split}
\end{align}
where
\begin{align*}
  I=
  &\frac{1}{2\varepsilon_n^{2}}\int_{\Omega}\eta^\chi_{\varepsilon_n}(|x_i-T_n(y)|)
  \left(M^{k\varepsilon_n} \varphi(T_n(y))+M_{k\varepsilon_n} \varphi(T_n(y))
  -M^{k\varepsilon_n} \varphi(y)-M_{k\varepsilon_n} \varphi(y)\right)\rho(y)dy\\
  &+
  \frac{1}{2\varepsilon_n^{2}}\int_{\Omega}
  \left(\eta^\chi_{\varepsilon_n}(|x_i-T_n(y)|)-\eta^\chi_{\varepsilon_n}(|x_i-y|)\right)
  \left(M^{k\varepsilon_n} \varphi(y)+M_{k\varepsilon_n} \varphi(y)-2\varphi(x_i)\right)\rho(y)dy\\
  =:&I_1+I_2.
\end{align*}
We estimate $I_1$ and $I_2$ in the following.

Let
\begin{equation*}
  \tilde{\varepsilon}_n=\varepsilon_n+\|Id-T_n\|_{L^\infty(\Omega)}.
\end{equation*}
It follows that
\begin{equation*}
  \frac{|x_i-T_n(y)|}{\varepsilon_n}\leq 1\Longrightarrow
  \frac{|x_i-y|}{\tilde{\varepsilon}_n}\leq 1, \quad \mbox{for any } y\in\Omega,
\end{equation*}
and
\begin{equation*}
  \int_{\Omega}\eta^\chi_{\varepsilon_n}(|x_i-T_n(y)|)dy
  \leq \frac{C}{\varepsilon_n^d}\int_{\Omega}\chi_{\{|x_i-T_n(y)|<\varepsilon_n\}}dy
  \leq \frac{C}{\varepsilon_n^d}\int_{\Omega}\chi_{\{|x_i-y|<\tilde{\varepsilon}_n\}}dy
  \leq C\left(\frac{\tilde{\varepsilon}_n}{\varepsilon_n}\right)^d.
\end{equation*}
We further have
\begin{equation*}
  |M^{k\varepsilon_n} \varphi(T_n(y))-M^{k\varepsilon_n} \varphi(y)|
  \leq C|T_n(y)-y|~~ \mbox{and }~~
  |M_{k\varepsilon_n} \varphi(T_n(y))-M_{k\varepsilon_n} \varphi(y)|
  \leq C|T_n(y)-y|.
\end{equation*}
Consequently,
\begin{equation*}
  |I_1|\leq C\left(\frac{\tilde{\varepsilon}_n}{\varepsilon_n}\right)^d
  \frac{\|Id-T_n\|_{L^\infty(\Omega)}}{\varepsilon_n^2}
  \leq C\frac{\|Id-T_n\|_{L^\infty(\Omega)}}{\varepsilon_n^2}.
\end{equation*}

To estimate $I_2$, we define a new constant
\begin{equation*}
  \hat{\varepsilon}_n=\varepsilon_n-\|Id-T_n\|_{L^\infty(\Omega)},
\end{equation*}
and see that
\begin{align*}
  |x_i-y|< \hat{\varepsilon}_n \Longrightarrow |x_i-y|<\varepsilon_n ~\text{ and }~ |x_i-T_n(y)|<\varepsilon_n, \\
  |x_i-y|> \tilde{\varepsilon}_n \Longrightarrow |x_i-y|>\varepsilon_n ~\text{ and }~ |x_i-T_n(y)|>\varepsilon_n,
\end{align*}
Now the integral in $I_2$ can be split into three parts, i.e.,
\begin{equation*}
  I_2=\int_\Omega=
  \int_{|x_i-y|> \tilde{\varepsilon}_n}
  +\int_{|x_i-y|< \hat{\varepsilon}_n}
  +\int_{\hat{\varepsilon}_n\leq|x_i-y|\leq \tilde{\varepsilon}_n}
  =:I_{2,1}+I_{2,2}+I_{2,3}.
\end{equation*}
By the definition, $I_{2,1}=0$.
For the second integral, we have, by the Lipschitz of $\eta$ and $\varphi$,
\begin{align*}
  |I_{2,2}|=&
  \frac{1}{2\varepsilon_n^{2}}\int_{|x_i-y|< \hat{\varepsilon}_n}
  \left|\eta_{\varepsilon_n}(|x_i-T_n(y)|)-\eta_{\varepsilon_n}(|x_i-y|)\right|
  \left|M^{k\varepsilon_n} \varphi(y)+M_{k\varepsilon_n} \varphi(y)-2\varphi(x_i)\right|\rho(y)dy\\
  &\leq \frac{C}{\varepsilon_n^{d+2}}\int_{|x_i-y|< \hat{\varepsilon}_n}
  \left|\frac{|x_i-T_n(y)|}{\varepsilon_n}-\frac{|x_i-y|}{\varepsilon_n}\right|
  \left(|x_i-y|+k\varepsilon_n\right)dy\\
  &\leq C\left(\frac{\hat{\varepsilon}_n}{\varepsilon_n}\right)^d
  \frac{\|Id-T_n\|_{L^\infty(\Omega)}}{\varepsilon_n^2}
  \leq C\frac{\|Id-T_n\|_{L^\infty(\Omega)}}{\varepsilon_n^2}.
\end{align*}
For the third integral, we have
\begin{align*}
  |I_{2,3}|
  &\leq \frac{C}{\varepsilon^{d+2}_n}\int_{\hat{\varepsilon}_n\leq|x_i-y|\leq \tilde{\varepsilon}_n}
  \left|M^{k\varepsilon_n} \varphi(y)+M_{k\varepsilon_n} \varphi(y)-2\varphi(x_i)\right|dy\\
 &\leq \frac{C}{\varepsilon^{d+1}_n}\int_{\hat{\varepsilon}_n\leq|x_i-y|\leq \tilde{\varepsilon}_n}dy
  \leq C\frac{(\tilde{\varepsilon}_n^{d}-\hat{\varepsilon}_n^{d})}{\varepsilon^{d+1}_n}
  \leq C\frac{\|Id-T_n\|_{L^\infty(\Omega)}}{\varepsilon^{2}_n}.
\end{align*}
Combining all estimates we arrive at
\begin{equation*}
  |I|\leq C\frac{\|Id-T_n\|_{L^\infty(\Omega)}}{\varepsilon^{2}_n}.
\end{equation*}
The proof is completed by using Lemma \ref{le:consistency_interior} for \eqref{eq:3.2}.
\end{proof}

\begin{lemma}\label{le:consistency_hypergraph_boundary}
Let $x\in\partial\Omega$ and $x_n\in \Omega_n$ such that $x_n\rightarrow x$ as $n\rightarrow \infty$.
If $\varphi\in C^3(\overline{\Omega})$, $\nabla \varphi(x)\neq 0$, and $n_0$ is sufficiently large,
then
\begin{equation*}
-L^{H_n}_{p,\varepsilon_n}\varphi(x_n)\leq 0 \mbox{ for any } n\geq n_0
\Longrightarrow
\min\left\{-L_p\varphi(x),\frac{\partial\varphi}{\partial\vec{n}}(x)\right\}\leq 0,
\end{equation*}
and
\begin{equation*}
-L_{p,\varepsilon}\varphi(x_\varepsilon)\geq 0  \mbox{ for any } n\geq n_0
\Longrightarrow
\max\left\{-L_p\varphi(x),\frac{\partial\varphi}{\partial\vec{n}}(x)\right\}\geq 0.
\end{equation*}
\end{lemma}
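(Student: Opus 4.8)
The plan is to transfer the boundary consistency already proved for the nonlocal operator in Lemma \ref{le:consistency_boundary} to the discrete operator $L^{H_n}_{p,\varepsilon_n}$ by means of the transportation map $T_n$, exactly as was done in the interior. The key observation is that the discrete-to-nonlocal error estimate obtained in the proof of Lemma \ref{le:consistency_hypergraph_interior} is purely local: the bounds for $I_1$ and $I_2$ there used only the Lipschitz continuity of $\eta$ and $\varphi$, the measure bound $\int_\Omega \eta^\chi_{\varepsilon_n}(|x_i-T_n(y)|)\,dy \leq C(\tilde\varepsilon_n/\varepsilon_n)^d$, and \eqref{eq:Tn}; the interior hypothesis $\mathrm{dist}(x_i,\mathcal O\cap\partial\Omega)>(k+1)\varepsilon_n$ was invoked only at the very last line, to apply Lemma \ref{le:consistency_interior}. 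Hence the same change of variables \eqref{eq:change_variable} yields, for $x_n$ close to $\partial\Omega$,
$$L^{H_n}_{p,\varepsilon_n}\varphi(x_n) = L_{p,\varepsilon_n}\varphi(x_n) + I_n, \qquad |I_n| \leq C\frac{\|Id-T_n\|_{L^\infty(\Omega)}}{\varepsilon_n^2} \to 0,$$
the limit being a consequence of \eqref{eq:Tn}. Near $\partial\Omega$ the operators $M^{k\varepsilon_n}$ and $M_{k\varepsilon_n}$ act on balls truncated by $\overline\Omega$, but this does not disturb the comparison $|M^{k\varepsilon_n}\varphi(T_n(y))-M^{k\varepsilon_n}\varphi(y)|\leq C|T_n(y)-y|$, so the estimate for $I_n$ carries over unchanged.

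For the first implication I would \emph{not} quote Lemma \ref{le:consistency_boundary} as a black box, since the assumption $-L^{H_n}_{p,\varepsilon_n}\varphi(x_n)\leq 0$ combined with the bridging estimate only gives $-L_{p,\varepsilon_n}\varphi(x_n)\leq |I_n|=o(1)$ rather than the exact sign required there. Instead I re-enter its proof. If $\nabla\varphi(x)$ points inward or $\nabla\varphi(x)\perp\vec{n}(x)$, then $\frac{\partial\varphi}{\partial\vec{n}}(x)\leq 0$ and the minimum is nonpositive. If $\nabla\varphi(x)$ points outward, then so does $\nabla\varphi(x_n)$ for $n$ large (by continuity of $\nabla\varphi$ and $x_n\to x$), and the computation of Lemma \ref{le:consistency_boundary} gives $L_{p,\varepsilon_n}\varphi(x_n)\leq L_p\varphi(x_n)+O(\varepsilon_n)$, because the subtracted term there is nonnegative by \eqref{eq:le:consistency_boundary:3}. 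Together with the bridging estimate and $L^{H_n}_{p,\varepsilon_n}\varphi(x_n)\geq 0$ this yields
$$0 \leq L^{H_n}_{p,\varepsilon_n}\varphi(x_n) = L_{p,\varepsilon_n}\varphi(x_n) + I_n \leq L_p\varphi(x_n) + O(\varepsilon_n) + |I_n|.$$
Letting $n\to\infty$ and using the continuity of $L_p\varphi$ near $x$ (valid since $\nabla\varphi(x)\neq 0$) gives $L_p\varphi(x)\geq 0$, hence $-L_p\varphi(x)\leq 0$, so the minimum is again nonpositive. The second implication then follows by applying the first to $-\varphi$ and using the oddness $L^{H_n}_{p,\varepsilon_n}(-\varphi)=-L^{H_n}_{p,\varepsilon_n}\varphi$ and $L_p(-\varphi)=-L_p\varphi$, which turns $\min\{L_p\varphi(x),-\frac{\partial\varphi}{\partial\vec{n}}(x)\}\leq 0$ into $\max\{-L_p\varphi(x),\frac{\partial\varphi}{\partial\vec{n}}(x)\}\geq 0$.

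The main obstacle I anticipate is the first step: confirming that the error $I_n$ is still $o(1)$ when $x_n$ lies within $O(\varepsilon_n)$ of $\partial\Omega$, where the $M$-operators see only a portion of their neighborhoods and the push-forward points $T_n(y)$ may cross $\partial\Omega$. I expect this to be routine in light of the interior proof, but it is precisely the place where the regularity of $\partial\Omega$ and the truncation of the balls must be verified rather than inherited, so I would treat it carefully before passing to the limit.
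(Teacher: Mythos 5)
Your proposal is correct and takes essentially the same route as the paper: the paper's proof is precisely the one-line combination of the discrete-to-nonlocal bridging estimate from the proof of Lemma \ref{le:consistency_hypergraph_interior} with the boundary consistency of Lemma \ref{le:consistency_boundary} and \eqref{eq:Tn}. Your refinement of re-entering the proof of Lemma \ref{le:consistency_boundary} rather than citing it as a black box (since the transfer only gives $-L_{p,\varepsilon_n}\varphi(x_n)\leq o(1)$ instead of the exact sign hypothesis) is a legitimate tightening of the paper's argument, which implicitly relies on the robustness of the same limiting inequality \eqref{eq:Lemma7} to this $o(1)$ perturbation.
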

\begin{proof}
  Following the proof of Lemma \ref{le:consistency_hypergraph_interior}, we have
  \begin{equation*}
  L^{H_n}_{p,\varepsilon_n}\varphi(x_n)\leq L_{p,\varepsilon_n}\varphi(x_n)
  +C\frac{\|Id-T_n\|_{L^\infty(\Omega)}}{\varepsilon^{2}_n}.
\end{equation*}
The Lemma is proven by applying Lemma \ref{le:consistency_boundary} for $L_{p,\varepsilon_n}\varphi(x_n)$ and utilizing \eqref{eq:Tn}.
\end{proof}

\section{Discrete to continuum convergence}
In this section, we prove Theorem \ref{th:convergence}.
A key ingredient is a H\"older estimate for the solution $u_n$ of the hypergraph $p$-Laplacian equation \eqref{eq:2.1}.
The estimate allows us to apply the Arzel\`{a}--Ascoli compactness theorem, yielding the existence of a subsequence of $u_n$ that converges uniformly to the solution of equation \eqref{eq:2.2}. 
The proof is motivated by \cite{calder2018game}, where the game-theoretic $p$-Laplacian is studied. 
It follows the same strategy, with minor modifications.

\subsection{H\"older regularity of the hypergraph $p$-Laplacian equation}
The basic idea to establish the H\"older estimate is to utilize the comparison principle Proposition \ref{pr:comparison_principle_hypergraph} for the solution of the hypergraph $p$-Laplacian equation and a special solution of the form $|x-x_0|^\alpha +C(\varepsilon_n)$.
We need to show that $|x-x_0|^\alpha +C(\varepsilon_n)$ is indeed a solution for any $x_0\in\Omega_n$ if $\alpha$ and $C(\varepsilon_n)$ are appropriately selected.
The cases of $x$ away from $x_0$ and $x$ near $x_0$ are different and will be discussed separately in the following.

\begin{lemma}\label{le:holder_1}
  Let $\varepsilon_n\gg\sqrt{\delta_n}$, $p>d$, $0<\alpha<\frac{p-d}{p-1}$, $x_0\in \Omega_n$ with $\mbox{dist}(x_0,\partial\Omega)>(k+1)\varepsilon_n$, and $v(x)=|x-x_0|^\alpha$. If $n$ is sufficiently large, there exists a small constant $r>0$ and a large constant $m>0$, such that
  \begin{equation*}
    L^{H_n}_{p,\varepsilon_n}v(x_i)\leq 0,
  \end{equation*}
  for any $x_i\in\Omega_n\backslash\mathcal{O}$ with $ m\varepsilon_n<|x_i-x_0|\leq r$.
\end{lemma}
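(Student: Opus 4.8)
The plan is to use $v(x)=|x-x_0|^\alpha$ as a comparison function: once we know $L^{H_n}_{p,\varepsilon_n}v\le 0$ on the annulus, $v$ (up to an additive constant) may be compared against the solution $u_n$ through Proposition~\ref{pr:comparison_principle_hypergraph}, which is the source of the H\"older bound. To establish $L^{H_n}_{p,\varepsilon_n}v\le 0$ I would first analyse the continuum normalised operator $L_p v$, show it is strictly negative of order $|x-x_0|^{\alpha-2}$, and then transport this sign to the discrete operator using the consistency estimates of Section~\ref{se:3.1}, making sure the errors are swallowed by the negative leading term.

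For the continuum step, $v$ is radial, so with $r=|x-x_0|$ one has $\Delta v=\alpha(\alpha+d-2)r^{\alpha-2}$, $\Delta_\infty v=\alpha(\alpha-1)r^{\alpha-2}$ and $|\nabla v|=\alpha r^{\alpha-1}$. Substituting into $L_p=\sigma_{\eta,1}\Delta^\rho+k^2\sigma_{\eta,2}\Delta_\infty^\rho$ and using $k^2\sigma_{\eta,2}=(p-2)\sigma_{\eta,1}$ to merge the two second-order coefficients gives
\begin{equation*}
  L_p v=2\sigma_{\eta,1}\,\nabla\rho\cdot\nabla v+\sigma_{\eta,1}\rho\,\alpha\big[(p-1)\alpha-(p-d)\big]\,r^{\alpha-2}.
\end{equation*}
The bracket is negative exactly under the hypothesis $0<\alpha<\frac{p-d}{p-1}$, and since $\rho>0$ the second term is a strictly negative multiple of $r^{\alpha-2}$; the first term is only $O(r^{\alpha-1})$ and is therefore negligible for small $r$. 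Choosing $r$ small I obtain $L_p v\le -c\,|x-x_0|^{\alpha-2}$ on the annulus, with $c>0$ depending on $p,d,\alpha,\sigma_{\eta,1}$ and $\min_{\overline\Omega}\rho$.

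For the transfer step the difficulty is that $v\notin C^3(\overline\Omega)$ because of the singularity at $x_0$, so Lemma~\ref{le:consistency_hypergraph_interior} cannot be quoted with its global constant. The remedy is locality: $L^{H_n}_{p,\varepsilon_n}v(x_i)$ depends only on the values of $v$ on $\overline B(x_i,(k+1)\varepsilon_n)$, and as soon as $m>k+1$ this ball excludes $x_0$, where $v$ is smooth with $\|v\|_{C^3(\overline B(x_i,(k+1)\varepsilon_n))}\le C|x_i-x_0|^{\alpha-3}$ and $\mathrm{Lip}(v)\le C|x_i-x_0|^{\alpha-1}$. Re-running the proofs of Lemma~\ref{le:consistency_interior} and Lemma~\ref{le:consistency_hypergraph_interior} with these position-dependent norms (the Taylor error being governed by the $C^3$ norm, the transportation error only by the Lipschitz norm) yields
\begin{equation*}
  L^{H_n}_{p,\varepsilon_n}v(x_i)\le L_p v(x_i)+C|x_i-x_0|^{\alpha-3}\varepsilon_n+C|x_i-x_0|^{\alpha-1}\frac{\|Id-T_n\|_{L^\infty(\Omega)}}{\varepsilon_n^2}.
\end{equation*}

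Combining the two steps and factoring out $|x_i-x_0|^{\alpha-2}>0$ gives
\begin{equation*}
  L^{H_n}_{p,\varepsilon_n}v(x_i)\le |x_i-x_0|^{\alpha-2}\left[-c+C\frac{\varepsilon_n}{|x_i-x_0|}+C|x_i-x_0|\frac{\|Id-T_n\|_{L^\infty(\Omega)}}{\varepsilon_n^2}\right].
\end{equation*}
On the annulus $|x_i-x_0|>m\varepsilon_n$, so $\frac{\varepsilon_n}{|x_i-x_0|}<\frac1m$; and since $|x_i-x_0|\le r\le1$ while $\frac{\|Id-T_n\|_{L^\infty(\Omega)}}{\varepsilon_n^2}\to0$ by \eqref{eq:Tn}, the last term is $o(1)$ as $n\to\infty$. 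Hence the bracket is bounded by $-c+\frac Cm+o(1)$, which is negative once $m>2C/c$ (and $m>k+1$) and $n$ is large, proving $L^{H_n}_{p,\varepsilon_n}v(x_i)\le0$. I expect the genuine obstacle to be precisely the bookkeeping in the transfer step: one must confirm that the consistency error scales like the local $C^3$ norm $|x_i-x_0|^{\alpha-3}$ while the transportation error scales only like the Lipschitz norm $|x_i-x_0|^{\alpha-1}$, because it is this gap in powers---together with the lower bound $|x_i-x_0|>m\varepsilon_n$---that lets a single choice of large $m$ dominate both errors by the $-c|x_i-x_0|^{\alpha-2}$ term.
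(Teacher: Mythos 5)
Your interior argument is, in essence, the paper's own proof: the same radial computation (the paper's \eqref{eq:3.3} is exactly your identity $\alpha+d-2+(p-2)(\alpha-1)=(p-1)\alpha-(p-d)<0$ for $\alpha<\frac{p-d}{p-1}$), and the same transfer via the consistency lemma with the singularity handled locally --- the paper's error term $C\left(|x_i-x_0|^{\alpha-3}+1\right)\varepsilon_n+C\|Id-T_n\|_{L^\infty(\Omega)}/\varepsilon_n^2$ is precisely your position-dependent-norm bookkeeping, and the final choice (first $m$ large so that $\varepsilon_n/|x_i-x_0|<1/m$, then $r$ small, then $n$ large using \eqref{eq:Tn}) is identical.

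However, there is a genuine gap: your argument only covers $x_i$ with $\mbox{dist}(x_i,\partial\Omega)>(k+1)\varepsilon_n$, while the lemma must hold on the whole annulus $m\varepsilon_n<|x_i-x_0|\leq r$. The hypothesis only keeps $x_0$ at distance $(k+1)\varepsilon_n$ from $\partial\Omega$, so when $x_0$ lies within roughly $r$ of the boundary the annulus contains points $x_i$ with $\mbox{dist}(x_i,\partial\Omega)\leq(k+1)\varepsilon_n$. For such $x_i$ the kernel ball and the balls defining $M^{k\varepsilon_n}$, $M_{k\varepsilon_n}$ stick out of $\Omega$, the Taylor expansion behind Lemma~\ref{le:consistency_hypergraph_interior} breaks down, and the truncation produces a boundary-layer term of order $1/\varepsilon_n$ that cannot be absorbed by $-c|x_i-x_0|^{\alpha-2}$. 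The paper closes this case with a one-sided argument: since $\nabla v(x_i)=\alpha|x_i-x_0|^{\alpha-2}(x_i-x_0)$ points outward from $\Omega$ at such points (this uses the convexity of $\Omega$ and the interior position of $x_0$), the boundary correction $I_2$ in the proof of Lemma~\ref{le:consistency_boundary} is nonnegative, so the inequality $L_{p,\varepsilon_n}v(x_i)\leq L_p v(x_i)+O(\cdot)$ --- exactly the direction needed for a supersolution --- still holds; see \eqref{eq:Lemma7}. A second, minor omission: the sum defining $L^{H_n}_{p,\varepsilon_n}$ runs over all $n$ points, including the $N$ labeled points, which are deterministic and not governed by the transportation map; the paper splits these $N$ terms off and bounds their contribution by $C/(n\varepsilon_n^2)$, which after factoring out $|x_i-x_0|^{\alpha-2}$ is $o(1)$ on the annulus since $n\varepsilon_n^2\rightarrow\infty$. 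This is easy bookkeeping, but since you invoke Lemma~\ref{le:consistency_hypergraph_interior} wholesale (whose hypotheses also require distance from $\mathcal{O}$), it needs to be said.
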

\begin{proof}
  For any $x\in\Omega$, if $x\neq x_0$,
  \begin{align*}
    \Delta^\rho v(x)
    &=\rho(x)\Delta v(x)+2\nabla\rho(x)\cdot\nabla v(x)
    \leq\rho(x)\left(\Delta v(x)+C|\nabla v(x)|\right) \\
    &=\rho(x)\left(\alpha(\alpha+d-2)|x-x_0|^{\alpha-2}
    +C\alpha|x-x_0|^{\alpha-1}\right)\\
    &\leq(\alpha+d-2+C|x-x_0|)\rho(x)\alpha|x-x_0|^{\alpha-2},
  \end{align*}
  and
  \begin{align*}
    \Delta_\infty^\rho v(x)=
      \rho(x)\alpha(\alpha-1)|x-x_0|^{\alpha-2},
  \end{align*}
  from which we have
  \begin{align}\label{eq:3.3}
  \begin{split}
    L_p v(x)
    &= \sigma_{\eta,1}\left(\Delta^\rho v(x) +(p-2)\Delta_\infty^\rho v(x)\right)\\
    &\leq C\left(\alpha+d-2+C|x-x_0|+(p-2) (\alpha-1)\right)|x-x_0|^{\alpha-2}.
  \end{split}
  \end{align}
  Let $x_i\in \Omega_{n}\backslash\mathcal{O}$ and $x_i\neq x_0$. Notice that
  \begin{align*}
    L^{H_n}_{p,\varepsilon_n}v(x_i)=&\frac{1}{2n\varepsilon_n^{2}}\sum\limits_{j=N+1}^n \eta^\chi_{\varepsilon_n}(|x_i-x_j|)\left(M^{k\varepsilon_n} v(x_j)+M_{k\varepsilon_n} v(x_j)-2v(x_i)\right)\\
    &+\frac{1}{2n\varepsilon_n^{2}}\sum\limits_{j=1}^N \eta^\chi_{\varepsilon_n}(|x_i-x_j|)\left(M^{k\varepsilon_n} v(x_j)+M_{k\varepsilon_n} v(x_j)-2v(x_i)\right).
  \end{align*}
  If $\mbox{dist}(x_i,\partial\Omega)>(k+1)\varepsilon_n$, applying Lemma \ref{le:consistency_hypergraph_interior} for the first summation and utilizing \eqref{eq:3.3}, we obtain that
  \begin{align*}
    L^{H_n}_{p,\varepsilon_n} v(x_i)
    &\leq L_p v(x_i)
    +C\left(|x_i-x_0|^{\alpha-3}+1\right)\varepsilon_n
    +\frac{C\|Id-T_n\|_{L^\infty(\Omega)}}{\varepsilon^{2}_n}+\frac{C}{n\varepsilon_n^{2}}\\
    &\leq C\left(\alpha+d-2+(p-2) (\alpha-1)+C|x_i-x_0|+\frac{\varepsilon_n}{|x_i-x_0|}\right. \\
    &\qquad\qquad\qquad\qquad \left.+ \left(\varepsilon_n
    +\frac{\|Id-T_n\|_{L^\infty(\Omega)}}{\varepsilon^{2}_n}+\frac{1}{n\varepsilon_n^{2}}\right)|x_i-x_0|^{2-\alpha}  \right)|x_i-x_0|^{\alpha-2}.
  \end{align*}
  The result still holds when $\mbox{dist}(x_i,\partial\Omega)\leq(k+1)\varepsilon_n$. In fact, we notice that $\nabla v(x_i)$ points outward from $\Omega$. Now we only need to use \eqref{eq:Lemma7} in Lemma \ref{le:consistency_boundary} and translate $L_{p,\varepsilon}$ into $L^{H_n}_{p,\varepsilon_n}$ by $T_n$.

  By the assumptions $p>d$ and $\varepsilon_n\gg\sqrt{\delta_n}$, we have
  $\alpha+d-2+(p-2)(\alpha-1)< 0$,  $\frac{\|Id-T_n\|_{L^\infty(\Omega)}}{\varepsilon^{2}_n}\rightarrow 0$, and $n\varepsilon_n^2\rightarrow\infty$.
  Recall that $m\varepsilon_n\leq |x_i-x_0|\leq r$.
  This yields
  \begin{equation*}
    L^{H_n}_{p,\varepsilon_n} v(x_i)
    \leq C\left(\alpha+d-2+(p-2)(\alpha-1)+Cr+\frac{1}{m}+C|m\varepsilon_n|^{2-\alpha}\right)
    |x_i-x_0|^{\alpha-2}\leq 0,
  \end{equation*}
  for a sufficiently small $r>0$ and a sufficiently large $m$.
\end{proof}

\begin{lemma}\label{le:holder_2}
  Let $p\geq 2$, $m>0$, $0<\alpha<1$, and $x_0\in\Omega_n$.
  Define
  \begin{equation*}
    v(x)=|x-x_0|^\alpha+M\varepsilon_n^\alpha\sum_{s=1}^\infty\beta^s
    \chi_{\{2|x-x_0|>(s-1)\varepsilon_n\}}.
  \end{equation*}
  If $n$ is sufficiently large, there exist a small constant $\beta>0$ and a large constant $M>0$, such that
  \begin{equation*}
    L^{H_n}_{p,\varepsilon_n}v(x_i)\leq 0,
  \end{equation*}
  for any $x_i\in\Omega_n\backslash\mathcal{O}$ with $ 0<|x_i-x_0|\leq (m+2k+2)\varepsilon_n$.
\end{lemma}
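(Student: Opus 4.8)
The plan is to apply the comparison principle (Proposition \ref{pr:comparison_principle_hypergraph}) against the explicit barrier $v$, so the entire task reduces to verifying the pointwise inequality $L^{H_n}_{p,\varepsilon_n}v(x_i)\le 0$ on the region $0<\abs{x_i-x_0}\le(m+2k+2)\varepsilon_n$. The first, organizing, step is to write $v=v_1+v_2$ with singular part $v_1(x)=\abs{x-x_0}^\alpha$ and step part $v_2(x)=M\varepsilon_n^\alpha\sum_{s\ge1}\beta^s\chi_{\{2\abs{x-x_0}>(s-1)\varepsilon_n\}}$, and to observe that both summands are \emph{nondecreasing} functions of the single variable $r=\abs{x-x_0}$. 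Consequently, for every $j$ the discrete maximum and minimum of $v$ over $\overline B(x_j,k\varepsilon_n)\cap\Omega_n$ are attained at the same data points (the one farthest from and the one nearest to $x_0$) as those of $v_1$ and of $v_2$ separately; hence $M^{k\varepsilon_n}v=M^{k\varepsilon_n}v_1+M^{k\varepsilon_n}v_2$ and likewise for the minima, and the bracket defining $L^{H_n}_{p,\varepsilon_n}v$ splits additively. This turns the nonlinear operator into a sum I can analyse piece by piece.

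For the singular part I would only need a crude uniform bound. Since every radius entering the computation is $O(\varepsilon_n)$ in this regime, each of $M^{k\varepsilon_n}v_1(x_j)$ and $M_{k\varepsilon_n}v_1(x_j)$ is at most $(r_j+k\varepsilon_n)^\alpha\le C\varepsilon_n^\alpha$, while $-2v_1(x_i)\le 0$; therefore the $v_1$-bracket is $\le C\varepsilon_n^\alpha$, and after the $\tfrac{1}{2\varepsilon_n^{2}}$ rescaling against the $O(1)$ kernel mass one gets $L^{H_n}_{p,\varepsilon_n}v_1(x_i)\le C\varepsilon_n^{\alpha-2}$. Note that no concavity or cancellation (unlike Lemma \ref{le:holder_1}) is required here: because $x_i$ is within $O(\varepsilon_n)$ of $x_0$, the whole singular contribution is simply an $O(\varepsilon_n^{\alpha-2})$ \emph{positive} error, which the step part will be engineered to beat.

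The heart of the proof is the step part, where the geometric decay $\beta^s$ produces a negative discrete second difference of size comparable to the singular error. Through the kernel-weighted summation the $v_2$-bracket averages, to leading order, to the radial second difference $g(r_i+k\varepsilon_n)+g((r_i-k\varepsilon_n)_+)-2g(r_i)$, whose sign I control as follows: since the jump of $g$ across level $s$ equals $M\varepsilon_n^\alpha\beta^s$ and the jumps sit on the half-lattice $r=(s-1)\varepsilon_n/2$, the outgoing increment $g(r_i+k\varepsilon_n)-g(r_i)$ collects \emph{smaller} (higher-index) jumps than the incoming increment $g(r_i)-g((r_i-k\varepsilon_n)_+)$, so the second difference is negative of order $-cM\varepsilon_n^\alpha$, giving $L^{H_n}_{p,\varepsilon_n}v_2(x_i)\le -cM\varepsilon_n^{\alpha-2}$. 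One then fixes $\beta$ small so that this negativity persists in the degenerate regime $r_i<k\varepsilon_n$, where the inner value saturates to $g(0)=0$ and the requirement collapses to the scalar inequality $2\beta-\beta^{2k+1}<1$ — this is exactly what forces $\beta$ to be small — and finally chooses $M$ so large that $cM$ exceeds the constant $C$ from the singular part, yielding $L^{H_n}_{p,\varepsilon_n}v\le 0$.

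Two technical threads must be woven through, and I expect the harder one to be the step part's discreteness. The crude ``center'' bound on the minimum has the wrong sign for $v_2$, so I genuinely need a data point near the \emph{inner} extremizer; this is supplied by $\varepsilon_n\gg\sqrt{\delta_n}$ via the transportation map, $\|Id-T_n\|_{L^\infty(\Omega)}\ll\varepsilon_n^2\ll\varepsilon_n$, together with the remark that a point pulled slightly inside still lies in $\overline B(x_j,k\varepsilon_n)$. The delicate part is that $g$ is discontinuous: the $o(\varepsilon_n)$ radial errors from discreteness, and the spread of $r_j$ about $r_i$ under the kernel average, can each straddle a jump of $g$, and these crossings a priori contribute at the same order $M\varepsilon_n^\alpha$ as the main term; controlling them (again using small $\beta$ to make high-index jumps negligible and the $O(1)$ jump count inside the support) is where the argument is most fragile. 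The second thread is mild: since $x_0$ need not be far from $\partial\Omega$, the sum may lose its outer part when $x_i$ is near the boundary, but $\nabla v_1(x_i)$ points radially away from $x_0$ and hence outward from the convex $\Omega$, so the boundary analysis of Lemma \ref{le:consistency_boundary} applies and the truncation only lowers $L^{H_n}_{p,\varepsilon_n}v$; the finitely many labeled points contribute $O(N/(n\varepsilon_n^2))\to 0$ and are absorbed into the error.
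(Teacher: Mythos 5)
Your proposal runs on the same engine as the paper's proof, so it is essentially correct in mechanism, but the bookkeeping differs and two of your technical claims should be corrected. The shared core: with $x_i$ in shell $l$ (i.e.\ $(l-1)\varepsilon_n<2|x_i-x_0|\le l\varepsilon_n$), any admissible excursion of radius $(k+1)\varepsilon_n$ switches on at most $2\lceil k\rceil+2$ indicators, each of size at most $M\beta^{l+1}\varepsilon_n^\alpha$, while a definite fraction of neighbours lies a full $\varepsilon_n/2$ closer to $x_0$ and contributes at most $-M\beta^{l}\varepsilon_n^\alpha$; smallness of $\beta$ wins at order $\beta^{l}$ versus $\beta^{l+1}$ (the paper makes this quantitative by choosing $\beta$ in terms of the point counts of $R_{x_i}$ and its complement), and then $M$ large absorbs the $O(\varepsilon_n^\alpha)$ error from $|x-x_0|^\alpha$ — exactly your ``$\beta$ small, then $M$ large'' scheme. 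Your additive splitting $v=v_1+v_2$, justified by radial monotonicity, is a valid organizational device the paper does not use (it bounds the bracket of $v$ wholesale). However, your claim that the ``center'' bound on the minimum has the wrong sign for $v_2$, so that you ``genuinely need'' a data point near the inner extremizer via $\|Id-T_n\|_{L^\infty(\Omega)}\ll\varepsilon_n^2$, is a misjudgement: the paper's key trick is precisely that $M_{k\varepsilon_n}v(x_j)\le v(x_j)$ (the center is admissible) already has the right sign for every $x_j$ in the lens $R_{x_i}=\Omega_n\cap B(x_i,\varepsilon_n)\cap B(x_0,|x_i-x_0|-\tfrac{\varepsilon_n}{2})$, since such $x_j$ sit in shell at most $l-1$; no transportation-map input is needed in this lemma, only a lower bound on the fraction $|R_{x_i}|/|B(x_i,\varepsilon_n)\cap\Omega_n|$. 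Likewise, your appeal to Lemma \ref{le:consistency_boundary} is inapplicable ($v$ is not $C^3$, indeed not even continuous) and unnecessary — the counting argument needs only that the near-side lens keeps a volume fraction inside $\Omega$, which convexity (the segment from $x_i$ to $x_0$ lies in $\Omega$) supplies, and the paper's proof makes no boundary case distinction at all; similarly the comparison principle plays no role inside this lemma (it enters only later, in Proposition \ref{pr:holder}). Finally, your scalar inequality $2\beta-\beta^{2k+1}<1$ for the degenerate regime is unjustified as written, though that regime is in fact the easy one: for $|x_j-x_0|\le k\varepsilon_n$ one has $M_{k\varepsilon_n}v(x_j)\le v(x_0)=0$, which is the paper's $l=1,2$ case.
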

\begin{proof}
  Assume that $(l-1)\varepsilon_n<2|x_i-x_0|\leq l\varepsilon_n$ for an integer $1\leq l\leq 2(m+2\lceil k\rceil+2)$. Here $\lceil k\rceil$ denotes the smallest integer greater than or equal to $k$.
  For any $x\in B(x_i,(k+1)\varepsilon_n)\cap\Omega_n$,
  it follows from the definition that
  \begin{align*}
    v(x)-v(x_i)
    &\leq |x-x_0|^\alpha-|x_i-x_0|^\alpha+
    (2\lceil k\rceil+2)M\beta^{l+1}\varepsilon_n^\alpha\\
    &\leq \left(|x_i-x_0|+(k+1)\varepsilon_n\right)^\alpha-|x_i-x_0|^\alpha+
    (2\lceil k\rceil+2)M\beta^{l+1}\varepsilon_n^\alpha\\
    &\leq (m+3k+3)^\alpha\varepsilon_n^\alpha+(2\lceil k\rceil+2)M\beta^{l+1}\varepsilon_n^\alpha.
  \end{align*}
  Consequently,
  \begin{equation}\label{eq:3.4a}
    M^{k\varepsilon_n} v(x_j)-v(x_i)\leq (m+3k+3)^\alpha\varepsilon_n^\alpha+(2\lceil k\rceil+2)M\beta^{l+1}\varepsilon_n^\alpha,
  \end{equation}
   for any $x_j\in B(x_i,\varepsilon_n)\cap\Omega_n$.

  To estimate $M_{k\varepsilon_n} v(x_j)-v(x_i)$, let us define
  \begin{align*}
    R_{x_i}=
    \begin{cases}
      \Omega_n\cap B(x_i,\varepsilon_n)\cap B(x_0,|x_i-x_0|-\frac{\varepsilon_n}{2}), & \mbox{if } l\geq 3, \\
      \Omega_n\cap B(x_i,\varepsilon_n)\cap B(x_0,\varepsilon_n), & \mbox{if } l=1,2.
    \end{cases}
  \end{align*}
  There exists a constant $0<C<1$ such that $\frac{|R_{x_i}|}{|B(x_i,\varepsilon_n)\cap\Omega_n|}\geq C$.
  For any $x_j\in R_{x_i}$,
  \begin{align}\label{eq:3.4b}
    M_{k\varepsilon_n} v(x_j)-v(x_i)\leq
    \begin{cases}
      v(x_j)-v(x_i)\leq -M\beta^{l}\varepsilon_n^\alpha, & \mbox{if } l\geq 3, \\
      v(x_0)-v(x_i)\leq -M\beta^{l}\varepsilon_n^\alpha, & \mbox{if } l=1,2.
    \end{cases}
  \end{align}
  If $x_j\in (B(x_i,\varepsilon_n)\cap\Omega_n)\backslash R_{x_i}$,
  \begin{equation}\label{eq:3.4c}
    M_{k\varepsilon_n} v(x_j)-v(x_i)\leq M^{k\varepsilon_n} v(x_j)-v(x_i).
  \end{equation}
  Thus we deduce from  \eqref{eq:3.4a}--\eqref{eq:3.4c} that
  \begin{align*}
    2n\varepsilon_n^{2}L^{H_n}_{p,\varepsilon_n}v(x_i)
    =&\left(\sum\limits_{j\in R_{x_i}}+\sum\limits_{j\notin R_{x_i}}\right)
    \eta^\chi_{\varepsilon_n}(|x_i-x_j|)\left(M^{k\varepsilon_n} v(x_j)+M_{k\varepsilon_n} v(x_j)-2v(x_i)\right)\\
    \leq &\sum\limits_{j\in R_{x_i}}\eta^\chi_{\varepsilon_n}(|x_i-x_j|)
    \left((m+3k+3)^\alpha\varepsilon_n^\alpha+(2\lceil k\rceil+2)M\beta^{l+1}\varepsilon_n^\alpha -M\beta^{l}\varepsilon_n^\alpha\right)\\
    &+2\sum\limits_{j\notin R_{x_i}}\eta^\chi_{\varepsilon_n}(|x_i-x_j|)
    \left((m+3k+3)^\alpha\varepsilon_n^\alpha+(2\lceil k\rceil+2)M\beta^{l+1}\varepsilon_n^\alpha\right) \\
    \leq & \frac{|R_{x_i}|}{\varepsilon_n^{d-\alpha}}\left((m+3k+3)^\alpha+(2\lceil k\rceil+2)M\beta^{l+1} -M\beta^{l}\right) \\
    &+ \frac{4|(B(x_i,\varepsilon_n)\cap\Omega_n)\backslash R_{x_i}|}{\varepsilon_n^{d-\alpha}}\left((m+3k+3)^\alpha+(2\lceil k\rceil+2)M\beta^{l+1}\right).
  \end{align*}
  By selecting $\beta=\frac{|R_{x_i}|}{2(2\lceil k\rceil+2)(|R_{x_i}|+4|(B(x_i,\varepsilon_n)\cap\Omega_n)\backslash R_{x_i}|)}$ and $M$ be large enough, we obtain that the right-hand side of the above inequality is non-positive and complete the proof.
\end{proof}

\begin{proposition}\label{pr:holder}
  Let $\varepsilon_n\gg\sqrt{\delta_n}$, $p>d$, and $0<\alpha<\frac{p-d}{p-1}$. If $u_n$ is a solution of equation \eqref{eq:2.1} and $n$ is sufficiently large, then
  \begin{equation*}
    |u_n(x_i)-u_n(x_j)|\leq C|x_i-x_j|^\alpha +C\varepsilon_n^\alpha,
  \end{equation*}
  for any $x_i,x_j\in\Omega_n$.
\end{proposition}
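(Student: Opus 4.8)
The plan is to run a barrier (comparison) argument built on Lemmas~\ref{le:holder_1} and \ref{le:holder_2}, after two reductions. First, comparing $u_n$ with the constant functions $\max_{\mathcal O}f$ and $\min_{\mathcal O}f$ through Proposition~\ref{pr:comparison_principle_hypergraph} gives the $n$-independent bound $\|u_n\|_{L^\infty(\Omega_n)}\le K:=\max_{\mathcal O}|f|$. Second, since $\Omega$ is bounded, the claim is trivial when $|x_i-x_j|\ge r$ (the fixed radius of Lemma~\ref{le:holder_1}), because then $|u_n(x_i)-u_n(x_j)|\le 2K\le (2K r^{-\alpha})|x_i-x_j|^{\alpha}$. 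Hence it suffices to prove the one-sided local estimate
\begin{equation*}
  u_n(x)-u_n(x_0)\le C\left(|x-x_0|^{\alpha}+\varepsilon_n^{\alpha}\right),\qquad x_0\in\Omega_n,\ |x-x_0|\le r,
\end{equation*}
for a constant $C$ independent of $x_0$ and $n$; applying it with the base point taken to be $x_j$ and then $x_i$ yields the full two-sided statement.

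Next I would manufacture, for each base point, a single function that is a supersolution ($L^{H_n}_{p,\varepsilon_n}\le 0$) on all of $\Omega_n\setminus\mathcal O$. The key structural fact is that $L^{H_n}_{p,\varepsilon_n}$ is order preserving in the neighbouring values: since $M^{k\varepsilon_n}(\min\{g_1,g_2\})\le M^{k\varepsilon_n}g_i$ and $M_{k\varepsilon_n}(\min\{g_1,g_2\})\le M_{k\varepsilon_n}g_i$, the pointwise minimum of two supersolutions, and the minimum of a supersolution with a constant, are again supersolutions. Writing $g(x)=|x-x_0|^{\alpha}$ and letting $\tilde v$ denote the modified barrier of Lemma~\ref{le:holder_2}, I would set
\begin{equation*}
  w(x)=u_n(x_0)+\min\left\{A\,\tilde v(x),\,A\,g(x)+b,\,c\right\},
\end{equation*}
and choose $A,b,c>0$ so that the branch $A\tilde v$ is active for $|x-x_0|\lesssim m\varepsilon_n$ (the range of Lemma~\ref{le:holder_2}), the branch $Ag+b$ for $m\varepsilon_n\lesssim|x-x_0|\le r$ (the range of Lemma~\ref{le:holder_1}), and the cap $c=Ar^{\alpha}+b$ for $|x-x_0|>r$. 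Because the step-correction in $\tilde v$ is nondecreasing in $|x-x_0|$ and bounded by $C\varepsilon_n^{\alpha}$, the crossover between the first two branches can be placed inside the overlap $(m\varepsilon_n,(m+2k+2)\varepsilon_n]$ of the two lemmas, so at every point the active branch is a genuine supersolution; thus $w$ is a global supersolution with $0\le w-u_n(x_0)\le A|x-x_0|^{\alpha}+C\varepsilon_n^{\alpha}$.

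With $w$ in hand I would invoke Proposition~\ref{pr:comparison_principle_hypergraph} for $u_n$ (with $L^{H_n}_{p,\varepsilon_n}u_n=0$) and $w$, adjoining the center $x_0$ to the Dirichlet set $\mathcal O$ --- legitimate because the underlying maximum-principle argument is valid for any superset of $\mathcal O$ and $w$ fails to be a supersolution only at its vertex $x_0$, where $u_n-w=0$. This yields $\max_{\Omega_n}(u_n-w)=\max_{\mathcal O\cup\{x_0\}}(u_n-w)$. Labels with $|x-x_0|>r$ contribute $\le 2K-Ar^{\alpha}\le 0$ once $A\ge 2K r^{-\alpha}$, and $u_n-w=0$ at $x_0$. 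The genuine difficulty, and the crux of the proof, is a label $x^\ast\in\mathcal O$ lying within $r$ of $x_0$ (at most one, if $r$ is taken below the fixed minimal separation of the finite set $\mathcal O$): there the needed bound $f(x^\ast)-u_n(x_0)\le A|x^\ast-x_0|^{\alpha}+C\varepsilon_n^{\alpha}$ is essentially the estimate being proved, so a direct argument is circular.

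I would break the circle by first proving the estimate with the base point at a label. Centering both a supersolution barrier and its reflection (a subsolution, since $L^{H_n}_{p,\varepsilon_n}(-\varphi)=-L^{H_n}_{p,\varepsilon_n}\varphi$) at $x^\ast\in\mathcal O$ makes the comparison on $\mathcal O$ completely clean: one has equality at $x^\ast$ and all remaining labels sit at distance $>r$ and are controlled by $K$. This gives $|u_n(x)-f(x^\ast)|\le A_1(|x-x^\ast|^{\alpha}+C\varepsilon_n^{\alpha})$ for every $x\in\Omega_n$. Feeding this back (at $x=x_0$, with $A\ge A_1$) bounds the offending boundary term by $f(x^\ast)-u_n(x_0)-A|x^\ast-x_0|^{\alpha}\le C\varepsilon_n^{\alpha}$, whence $\max_{\mathcal O\cup\{x_0\}}(u_n-w)\le C\varepsilon_n^{\alpha}$ and the one-sided local estimate follows. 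The main obstacle is therefore organizational rather than a single hard inequality: assembling the two center-dependent local barriers into one globally valid comparison function, and handling base points adjacent to the fixed label set. The three facts that make it work are the uniform $L^\infty$ bound, the min/max stability of $L^{H_n}_{p,\varepsilon_n}$, and the $\varepsilon_n^{\alpha}$ slack in the target inequality, which absorbs both the size of the step-correction in $\tilde v$ and the defect created near the labels.
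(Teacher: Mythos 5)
Your proposal is correct and takes essentially the same route as the paper: the paper likewise glues the two barrier lemmas into a single supersolution $v_{x_j}=\min\{v_1,v_2\}$ (using the min-stability of $L^{H_n}_{p,\varepsilon_n}$), first runs the comparison with the base point at a label of $\mathcal{O}$ so that the Dirichlet data are controlled by the oscillation of $f$, and then feeds that estimate back through a second application of Proposition \ref{pr:comparison_principle_hypergraph}, with the center adjoined to the boundary set, to reach arbitrary base points. The only detail you skip is base points within $(k+1)\varepsilon_n$ of $\partial\Omega$, which are excluded by the hypothesis of Lemma \ref{le:holder_1}; the paper dispatches this case at the end with a triangle inequality.
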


\begin{proof}
Let $x_j\in\Omega_n$ be fixed and $\mbox{dist}(x_j,\partial\Omega)>(k+1)\varepsilon_n$. We define
\begin{align*}
  &v_1(x_i)=|x_i-x_j|^\alpha
  +M\varepsilon_n^\alpha\sum_{s=1}^\infty\beta^s\chi_{\{2|x_i-x_j|>(s-1)\varepsilon_n\}},\\
  &v_2(x_i)=|x_i-x_j|^\alpha+M\varepsilon_n^\alpha\frac{\beta(1-\beta^{2m+5})}{1-\beta},
\end{align*}
for any $x_i\in \Omega_{n}\backslash\mathcal{O}$
and
\begin{equation}\label{eq:3.5}
  v_{x_j}(x_i)=\min\left\{v_1(x_i), v_2(x_i) \right\}
  =\begin{cases}
     v_1(x_i), & \mbox{if } |x_i-x_j|\leq (m+k+1)\varepsilon_n, \\
     v_2(x_i), & \mbox{if } |x_i-x_j|\geq (m+k+1)\varepsilon_n.
   \end{cases}
\end{equation}
It follows from Lemmas \ref{le:holder_1}--\ref{le:holder_2} that
\begin{equation}\label{eq:3.5a}
  L^{H_n}_{p,\varepsilon_n}v_{x_j}(x_i)\leq 0,
\end{equation}
for any $|x_i-x_j|\leq r$.
In fact,
\begin{itemize}
  \item if $|x_i-x_j|\leq m\varepsilon_n$, $L^{H_n}_{p,\varepsilon_n}v_{x_j}(x_i)=L^{H_n}_{p,\varepsilon_n}v_1(x_i)\leq 0$.
  \item If $|x_i-x_j|\geq (m+2k+2)\varepsilon_n$, $L^{H_n}_{p,\varepsilon_n}v_{x_j}(x_i)=L^{H_n}_{p,\varepsilon_n}v_2(x_i)\leq 0$.
  \item If $(m+k+1)\varepsilon_n\leq |x_i-x_j|\leq (m+2k+2)\varepsilon_n$, we have $v_{x_j}(x_i)=v_2(x_i)$ and $v_{x_j}\leq v_2$ in $B(x_i,(k+1)\varepsilon_n)$. Thus $L^{H_n}_{p,\varepsilon_n}v_{x_j}(x_i)\leq L^{H_n}_{p,\varepsilon_n}v_2(x_i)\leq 0$.
  \item If $m\varepsilon_n\leq |x_i-x_j|\leq (m+k+1)\varepsilon_n$, we have $v_{x_j}(x_i)=v_1(x_i)$ and $v_{x_j}\leq v_1$ in $B(x_i,(k+1)\varepsilon_n)$. Thus $L^{H_n}_{p,\varepsilon_n}v_{x_j}(x_i)\leq L^{H_n}_{p,\varepsilon_n}v_1(x_i)\leq 0$.
\end{itemize}
This proves \eqref{eq:3.5a}.

Let us temporarily assume that $x_j\in\mathcal{O}$. There exists a constant $C>0$ such that
\begin{equation*}
  Cv_{x_j}(x_i)\geq \max f-\min f,
\end{equation*}
for any $x_i\in\mathcal{O}\backslash\{x_j\}$.
This yields
\begin{equation*}
  u_n(x_j)-Cv_{x_j}(x_i)\leq \min f\leq u_n(x_i)\leq \max f\leq u_n(x_j)+Cv_{x_j}(x_i),
\end{equation*}
for any $x_i\in\mathcal{O}$.
Notice that $u_n$ is a solution of equation \eqref{eq:2.1} and $v_{x_j}$ is a supersolution of equation \eqref{eq:2.1}, it follows from the comparison principle Proposition \ref{pr:comparison_principle_hypergraph} that
\begin{equation}\label{eq:3.5b}
  u_n(x_j)-Cv_{x_j}(x_i)\leq u_n(x_i)\leq u_n(x_j)+Cv_{x_j}(x_i),
\end{equation}
for any $x_i\in \Omega_n$.

We may rewrite \eqref{eq:3.5b} as
\begin{equation}\label{eq:3.5c}
  u_n(x_i)-Cv_{x_i}(x_j)\leq u_n(x_j)\leq u_n(x_i)+Cv_{x_i}(x_j).
\end{equation}
The maximum principle once again implies that \eqref{eq:3.5c} holds for any $x_i,x_j\in\Omega_n$
with $\mbox{dist}(x_i,\partial\Omega)>(k+1)\varepsilon_n$.
Since
\begin{equation*}
  |x_i-x_j|^\alpha\leq v_{x_i}(x_j)\leq |x_i-x_j|^\alpha+ M\varepsilon_n^\alpha\frac{\beta }{1-\beta},
\end{equation*}
we substitute it into \eqref{eq:3.5c} to find
\begin{equation*}
  u_n(x_i)-C\left(|x_i-x_j|^\alpha+\varepsilon_n^\alpha\right)
  \leq u_n(x_j)
  \leq u_n(x_i)+C\left(|x_i-x_j|^\alpha+\varepsilon_n^\alpha\right),
\end{equation*}
for any $x_i,x_j\in\Omega_n$
with $\mbox{dist}(x_i,\partial\Omega)>(k+1)\varepsilon_n$.
By the triangle inequality, the result also holds when $\mbox{dist}(x_i,\partial\Omega)\leq (k+1)\varepsilon_n$.
\end{proof}

\subsection{Proof of Theorem \ref{th:convergence}}
By Theorem \ref{th:hypergraph_existence} and Proposition \ref{pr:holder},
equation \eqref{eq:2.1} admits a unique solution $u_n$ such that
  \begin{equation*}
    |u_n(x_i)-u_n(x_j)|\leq C|x_i-x_j|^\alpha +C\varepsilon_n^\alpha,
  \end{equation*}
  for any $x_i,x_j\in\Omega_n$.
  Let $\tilde{u}_n(x)=u_n(T_n(x))$.
  For any $x,y\in\Omega$, we utilize the above estimate to find
  \begin{equation*}
    |\tilde{u}_n(x)-\tilde{u}_n(y)|\leq C|T_n(x)-T_n(y)|^\alpha +C\varepsilon_n^\alpha
    \leq C|x-y|^\alpha +C\|T_n-Id\|^\alpha_{L^\infty(\Omega)} +C\varepsilon_n^\alpha.
  \end{equation*}
By the Arzel\`{a}--Ascoli theorem (see the appendix in \cite{calder2015pde}),
there exists a subsequence of $\{\tilde{u}_n\}$ (still denoted by itself) and a function $u\in C^\alpha(\Omega)$ such that
\begin{equation*}
  \tilde{u}_n\rightarrow u\quad \mbox{uniformly in } \Omega,
\end{equation*}
  as $n\rightarrow\infty$.
  Since $\tilde{u}_n(x_i)= u_n(x_i)$ for $x_i\in\Omega_n$,
  it follows that
  \begin{equation}\label{eq:3.3.1}
    \lim_{n\rightarrow\infty}\max_{x_i\in\Omega_n}|u_n(x_i)-u(x_i)|=0.
  \end{equation}
We are left to show that $u$ is a viscosity solution of equation \eqref{eq:2.2}.

Let us first prove that $u$ is a viscosity subsolution.
Assume that $\varphi\in C^\infty(\overline{\Omega})$, $x_0\in\overline{\Omega}\backslash\mathcal{O}$, $\nabla\varphi(x_0)\neq 0$, and $u-\varphi$ has a strict global maximum at $x_0$.
  By \eqref{eq:3.3.1}, there exists a sequence of points $x_n\in\Omega_n\backslash\mathcal{O}$, such that ${u}_n-\varphi$ attains its global maximum at $x_n\in\Omega_n$ and $x_n\rightarrow x_0$ as $n\rightarrow\infty$.
  Consequently,
  \begin{equation*}
    u_n(x_n)-u_n(x_i)\geq \varphi(x_n)-\varphi(x_i),\quad \mbox{for any } x_i\in\Omega_n,
  \end{equation*}
  from which we deduce that
  \begin{equation*}
    M^{k\varepsilon_n}u_n(x_j)\leq M^{k\varepsilon_n}\varphi(x_j)+u_n(x_n)-\varphi(x_n),\quad
    M_{k\varepsilon_n}u_n(x_j)\leq M_{k\varepsilon_n}\varphi(x_j)+u_n(x_n)-\varphi(x_n),
  \end{equation*}
  for any $x_j\in\Omega_n$.
  This leads to
  \begin{align}\label{eq:3.3.2}
  \begin{split}
    0&=L^{H_n}_{p,\varepsilon_n}u_n(x_n)
    =\frac{1}{2n\varepsilon_n^{2}}\sum\limits_{j=1}^n \eta^\chi_{\varepsilon_n}(|x_n-x_j|)\left(M^{k\varepsilon_n} u_n(x_j)+M_{k\varepsilon_n} u_n(x_j)-2u_n(x_n)\right)\\
    &\leq\frac{1}{2n\varepsilon_n^{2}}\sum\limits_{j=1}^n \eta^\chi_{\varepsilon_n}(|x_n-x_j|)\left(M^{k\varepsilon_n} \varphi(x_j)+M_{k\varepsilon_n} \varphi(x_j)-2\varphi(x_n)\right)
    =L^{H_n}_{p,\varepsilon_n}\varphi(x_n).
  \end{split}
  \end{align}
  In the following, we consider the cases $x_0\in\Omega\backslash\mathcal{O}$ and $x_0\in\partial
  \Omega$ separately.

  If $x_0\in\Omega\backslash\mathcal{O}$, we utilize Lemma \ref{le:consistency_hypergraph_interior} to find
  \begin{equation*}
    L_p\varphi(x_0)=\lim_{n\rightarrow\infty}L_p\varphi(x_n)
    =\lim_{n\rightarrow\infty}L^{H_n}_{p,\varepsilon_n}\varphi(x_n)
    \geq \lim_{n\rightarrow\infty}L^{H_n}_{p,\varepsilon_n}u_n(x_n)=0,
  \end{equation*}
  which is equivalent to $-\Delta_p^\rho\varphi(x_0)\leq 0$.
  If $x_0\in\partial\Omega$, it follows from \eqref{eq:3.3.2} and Lemma \ref{le:consistency_hypergraph_boundary} that
  \begin{equation*}
    \min\left\{-L_p\varphi(x_0),\frac{\partial\varphi}{\partial\vec{n}}(x_0)\right\}\leq 0.
  \end{equation*}
  Again, we can replace $-L_p$ in the above with $-\Delta_p^\rho$.
  This verifies that $u$ is a viscosity subsolution of equation \eqref{eq:2.2}.
  The proof that $u$ is a viscosity supersolution is similar.

\section{Numerical experiments}
In this section, numerical experiments on high-dimensional data interpolation, including image inpainting and semi-supervised learning, are presented to evaluate the performance of graph and hypergraph equations.
Recall that $\Omega_n=\{x_i\}_{i=1}^n$ and $\{(x_i,y_i): x_i\in\mathcal{O},y_i\in\mathbb{R}\}$ are the data set and the labeled set, where $\mathcal{O}=\{x_i\}_{i=1}^N$, $1\leq N<n$.
The experimental setup follows that of \cite{shi2017weighted}.
For computational efficiency, we construct both the graph and the hypergraph using the $k$-nearest-neighbor ($k$-NN) approach, which differs slightly from the construction used in the theoretical analysis.
The similarity between two vertices $x_i,x_j\in\Omega_n$ is defined by
\begin{equation}\label{eq:5.0}
\tilde{w}_{i,j}
=\exp\left(-\frac{\|x_i-x_j\|^2}{\sigma(x_i)^2}\right),
\end{equation}
where $\sigma(x_i)$ denotes the distance from $x_i$ to its 20th nearest neighbor. To obtain symmetric weights, we use the edge weight
\[
w_{i,j}=\frac12\left(\tilde{w}_{i,j}+\tilde{w}_{j,i}\right).
\]
For each vertex $x_i$, $e_i\subset \Omega_i$ consists of the $50$ vertices with the largest values of $w_{i,j}$. The hyperedge is then given by
$E=\{e_i\}_{i=1}^n$.
We compare the performance of the graph Laplacian (GL) equation
\begin{equation}\label{eq:5.1}
  \begin{cases}
    \sum\limits_{j=1}^nw_{i,j}\chi_{e_i}(x_j)(u_n(x_j)-u_n(x_i))=0, & \mbox{if } x_i\in\Omega_n\backslash\mathcal{O}, \\
    u_n(x_i)=y_i, & \mbox{if } x_i\in \mathcal{O},
  \end{cases}
\end{equation}
with the hypergraph Laplacian (HGL) equation
\begin{align}\label{eq:5.2}
  \begin{cases}
    \sum\limits_{j=1}^n w_{i,j}\chi_{e_j}(x_i)
    \left(\max\limits_{x_k\in e_j}u_n(x_k)+\min\limits_{x_k\in e_j}u_n(x_k)-2u_n(x_i)\right)=0,\quad &x_i\in \Omega_{n}\backslash\mathcal{O},\\
    u_n(x_i)=y_i,\quad &x_i\in \mathcal{O}.
  \end{cases}
\end{align}
Equation \eqref{eq:5.1} corresponds to a linear system and can be solved efficiently \cite{shi2017weighted}.
For equation \eqref{eq:5.1}, we employ the fixed-point iteration
\begin{align*}
  u_n^{m+1}(x_i)=
  \begin{cases}
    \frac{1}{2\sum\limits_{j=1}^n w_{i,j}\chi_{e_j}(x_i)}
    \sum\limits_{j=1}^n w_{i,j}\chi_{e_j}(x_i)
    \left(\max\limits_{x_k\in e_j}u^m_n(x_k)+\min\limits_{x_k\in e_j}u^m_n(x_k)\right), & \mbox{if } x_i\in \Omega_{n}\backslash\mathcal{O}, \\
    x_i, & \mbox{if } x_i\in\mathcal{O}.
  \end{cases}
\end{align*}
which is convergent \cite{shi2024hypergraph1}. The iteration is terminated once $\|u_n^{m+1}-u_n^m\|_{\infty}\leq 1\times10^{-4}$.

\begin{figure}
  \centering
  \begin{tabular}{@{~}c@{~}c@{~}c@{~}c@{}}
  \begin{overpic}[width=.2\textwidth]{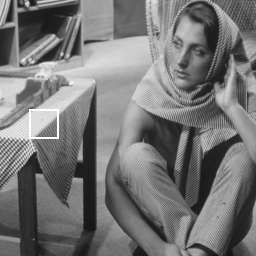}\put(50,0){}\end{overpic}&
  \begin{overpic}[width=.2\textwidth]{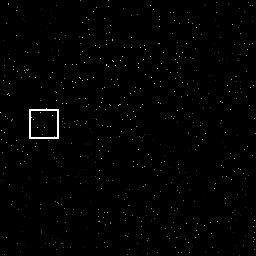}\put(50,0){\includegraphics[width=.1\textwidth]{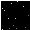}}\end{overpic}&
  \begin{overpic}[width=.2\textwidth]{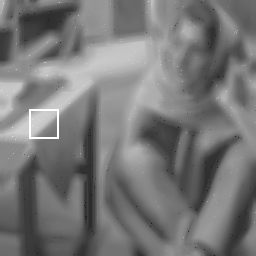}\put(50,0){\includegraphics[width=.1\textwidth]{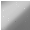}}\end{overpic}&
  \begin{overpic}[width=.2\textwidth]{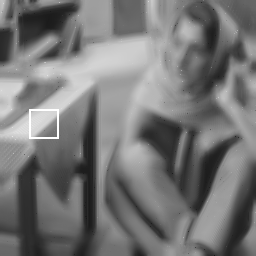}\put(50,0){\includegraphics[width=.1\textwidth]{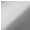}}\end{overpic}\\
  {\footnotesize\emph{Barbara}} & {\footnotesize\emph{1\% labeling rate}} & {\footnotesize\emph{GL, 20.80dB}} & {\footnotesize\emph{HGL, 23.30dB}} \\
  \begin{overpic}[width=.2\textwidth]{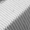}\put(50,0){}\end{overpic}&
  \begin{overpic}[width=.2\textwidth]{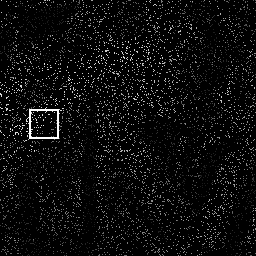}\put(50,0){\includegraphics[width=.1\textwidth]{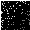}}\end{overpic}&
  \begin{overpic}[width=.2\textwidth]{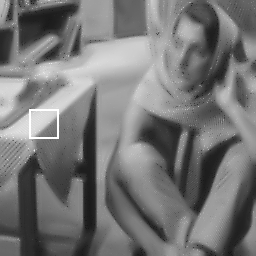}\put(50,0){\includegraphics[width=.1\textwidth]{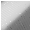}}\end{overpic}&
  \begin{overpic}[width=.2\textwidth]{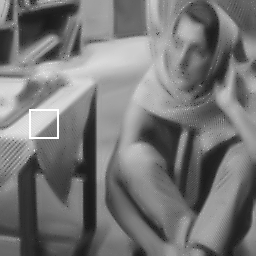}\put(50,0){\includegraphics[width=.1\textwidth]{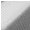}}\end{overpic}\\
    {\footnotesize\emph{zoomed-in region}} & {\footnotesize\emph{10\% labeling rate}} & {\footnotesize\emph{GL, 25.47dB}} & {\footnotesize\emph{HGL, 25.94dB}} \\
  \end{tabular}
    \caption{Inpainting results of GL and HGL for the test image \emph{Barbara}.
   From left to right: The test image, the observed pixels, inpainting results of \eqref{eq:5.1}, inpainting results of \eqref{eq:5.2}.
  }
    \label{fig:inpainting}
  \end{figure}

We first consider the image inpainting problem. Following \cite{shi2017weighted}, a point cloud is constructed from a discrete image
$
f\in\mathbb{R}^{n_1\times n_2}.
$
Let $p_{i,j}(f)$ denote the image patch centered at pixel $(i,j)$ with size $11\times 11$. 
A mirror extension of $f$ is used for pixels near the boundary.
The corresponding point cloud is defined by
\[
\mathcal{P}(f)=\{p_{i,j}(f):1\le i\le n_1,\;1\le j\le n_2\}
\subset\mathbb{R}^{11\times 11}.
\]
Let $n=n_1n_2$ and $\Omega_n=\mathcal{P}(f)$. Define
$u_n:\Omega_n\rightarrow\mathbb{R}$
by
$u_n(p_{i,j}(f))=f_{i,j}$,
where $f_{i,j}$ denotes the pixel intensity at $(i,j)$.
The image inpainting problem aims to recover the original image $f$ from the observed pixels
$\{f_{i,j}:(i,j)\in\mathcal O\}$,
where $\mathcal O\subset\{1,\ldots,n_1\}\times\{1,\ldots,n_2\}$.
This is an interpolation problem on the point cloud and solved by either equation \eqref{eq:5.1} or equation \eqref{eq:5.2}.

We assume that $\mathcal{P}(f)$ is known and the weights are computed from $\mathcal{P}(f)$ according to \eqref{eq:5.0}.
The labeled pixels $\mathcal{O}$ are selected uniformly at random. Figure \ref{fig:inpainting} presents the reconstruction results of GL and HGL under two different labeling rates for the test image \emph{Barbara}. 
When only $1\%$ of the pixels are labeled, HGL yields less spike artifacts and a substantially smoother reconstruction while preserving image details, resulting in a significantly higher PSNR. As the labeling rate increases to $10\%$, the performance gap between the two equations becomes much smaller, indicating that the advantage of HGL is most pronounced in the low-label regime.

We next consider semi-supervised learning on the well-known MNIST dataset, which consists of 70,000 grayscale images of handwritten digits ranging from $0$ to $9$. Each image is of size $28\times28$ and is regarded as a point in $\mathbb{R}^{784}$. The graph weights are computed according to \eqref{eq:5.0} in the same manner as in the image inpainting experiment.
Since the dataset contains ten classes, each label is encoded as a one-hot vector. The predicted class is then determined by the largest component of the interpolated vector.

Figure \ref{fig:SSL} reports the classification accuracy of GL and HGL under different labeling rates. The results are averaged over 10 independent runs. Similar to the image inpainting experiment, HGL consistently outperforms GL when the labeling rate is extremely low. As the labeling rate increases, the performance gap gradually diminishes.

\begin{figure}
  \centering
  \includegraphics[width=.5\textwidth]{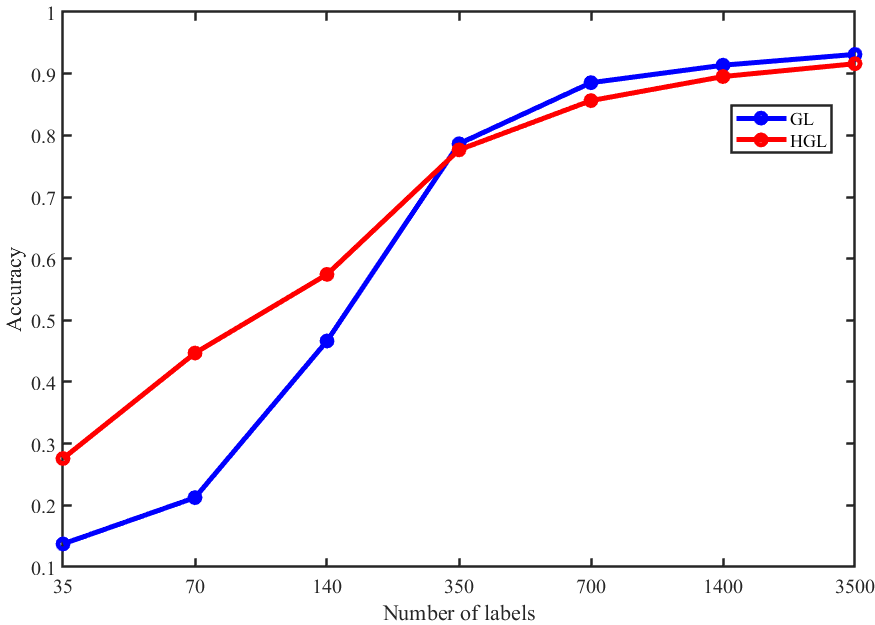}
    \caption{Semi-supervised learning accuracy of GL and HGL for the MNIST dataset.}
    \label{fig:SSL}
  \end{figure}

Finally, we compare the computational costs of the two equations. All experiments are performed in MATLAB R2020b on a desktop equipped with an Intel Core i7-8700 3.20 GHz CPU. For image inpainting, the average running times of GL and HGL are 5.3\,s and 9.3\,s, respectively. For semi-supervised learning, the corresponding running times are 38.0\,s and 47.4\,s, respectively. 
Although HGL requires additional computation due to the nonlinear fixed-point iteration, the increase in running time is moderate.

\section{Conclusion}
In this paper, we studied the continuum limit of a hypergraph $p$-Laplacian equation on point clouds for semi-supervised learning. 
It was proved that the hypergraph equation is a discretization of a weighted $p$-Laplacian equation with mixed Dirichlet and Neumann boundary conditions. 
Numerical experiments on high-dimensional data interpolation were also presented. 
Although the proposed hypergraph equation suppresses spike artifacts more effectively than the graph Laplacian equation in the extremely low-label regime, the reconstructed images still contain noticeable spikes. 
Developing more effective hypergraph-based interpolation models to further reduce these artifacts remains an interesting direction for future research.



\bibliographystyle{unsrt}
\bibliography{references}
\end{document}